\numberwithin{equation}{section}
\newtheorem{theorem}{Theorem}
\newtheorem{corollary}{Corollary}
\newtheorem{lemma}{Lemma}
\theoremstyle{definition}
\newtheorem*{example}{Example}
\newtheorem*{remark}{Remark}
\newtheorem*{definition}{Definition}
\long\def\symbolfootnote[#1]#2{\begingroup%
\def\thefootnote{\fnsymbol{footnote}}\footnote[#1]{#2}\endgroup}
\numberwithin{equation}{section}
\newcommand{\be}{\begin{equation}}
\newcommand{\ee}{\end{equation}}
\begin{document}

\def\rc#1{\frac{1}{#1}}
\def\NN{\mathbb{N}}

\title{Some Properties and Applications of Non-trivial Divisor Functions}
\date{\today}

\author{S. L. Hill, M. N. Huxley, M. C. Lettington, K. M. Schmidt}
\maketitle

\begin{abstract}
The $j$th divisor function $d_j$, which counts the ordered factorisations of a
positive integer into $j$ positive integer factors, is a very well-known
arithmetic function;
in particular, $d_2(n)$ gives the
number of divisors of $n$.
However, the $j$th non-trivial divisor function $c_j$, which counts the ordered proper
factorisations of a positive integer into $j$ factors, each of which is
greater than or equal to 2, is rather less well-studied. We also consider associated divisor functions $c_j^{(r)}$,
whose definition is motivated by the sum-over divisors recurrence for $d_j$.
After reviewing properties of $d_j$, we study analogous properties of $c_j$ and $c_j^{(r)}$,
specifically regarding their Dirichlet series and generating functions,
as well as representations in terms of binomial coefficient sums and hypergeometric series.
We also express their ratios as binomial coefficient
sums and hypergeometric series, and find explicit Dirichlet series and Euler
products in some cases.
As an illustrative application of the non-trivial and associated divisor
functions, we show how they can be used to count principal
reversible square matrices of the type considered by Ollerenshaw and Br\'ee, and hence sum-and-distance systems of integers.
\end{abstract}

\medskip
\noindent
{\bf Acknowledgement}\quad
Sally Hill's research was supported by Engineering and Physical Sciences Research Council DTP grant EP/L504749/1.

\section{Introduction}
\label{s1}
The $j$th divisor function $d_j$, which counts the ordered factorisations of a
positive integer into $j$ positive integer factors, is a very well-known
arithmetic function.
In particular, $d_2(n)$ --- sometimes called the divisor function --- counts
the number of ordered pairs of positive integers whose product is $n$, and
therefore, considering only the first factor in each pair, also counts the
number of divisors of $n$
%
%
(see papers 8 and 15 of \cite{rSR} and p.~10 of \cite{rPGLD}).
The divisor function lies at the heart of a number of open number theoretical
problems, e.g. the {\it additive divisor problem\/} of finding the asymptotic
of
\begin{equation}\label{adp}
\sum_{n\le x} d_j(n)\,d_j(n+h)
\end{equation}
for large $x$, which is
notoriously difficult if $j\ge 3$,
see e.g.~\cite{IWu}, \cite{ABR}, and, for $j=3$, \cite{Ivic}.

In the present paper, we consider the rather less well-studied $j$th
{\it non-trivial divisor function\/} $c_j$, which counts the ordered proper
factorisations of a positive integer into $j$ factors, each of which is
greater than or equal to 2.
While $d_j(n)$, for given $n$, is obviously monotone increasing in $j$, since
factors of 1 can be freely introduced, $c_j(n)$ will shrink back to 0 as
$j$ gets too large, and indeed
%
$c_j(n) = 0$ if $n < 2^j$.

Additionally we define the {\it associated divisor function\/} $c_j^{(r)}$, for $r\in\mathbb{N}_0$, by
\[c_j^{(0)}= c_j, \qquad  c_j^{(r)}(n) = \sum_{m|n}c^{(r-1)}_j(m)\quad (n, r\in\mathbb{N}). \]
This definition is motivated by the sum-over divisors recurrence for $d_j$, see Lemma \ref{djrec}.

The paper is organised as follows. After reviewing properties of $d_j$ in
Section 2, we proceed to study analogous properties of $c_j$ in Section 3,
specifically regarding its associated Dirichlet series and generating function,
and its representation in terms of binomial coefficient sums and
hypergeometric series.
A major complication in comparison to $d_j$ arises from the fact that $c_j$
is not multiplicative. We also provide formulae expressing $c_j$ in terms of
$d_j$ and vice versa.
In Section 4, we study the Dirichlet series and generating function for the
associated divisor functions $c_j^{(r)}$.
Noting general inequalities between the three types of divisor function in
Section 5, we observe how their ratios can be expressed as binomial coefficient
sums and hypergeometric series, and find explicit Dirichlet series and Euler
products for some of these.
As an illustrative application of the non-trivial and associated divisor
functions, we show in Section 6 how they can be used to count principal
reversible squares \cite{rOB} and sum-and-distance systems of integers.

Throughout the paper we use the notations $\mathbb{N} = \{1, 2, 3, \dots\}$,
$\mathbb{N}_0~=~\mathbb{N}~\cup~\{0\}$. For $n$ having prime factorisation
$n~=~ p_1^{a_1} p_2^{a_2} \cdots p_t^{a_t}$, we also use the symbol
$\Omega(n) = \sum\limits_{k=1}^t a_k$ .
\section{The multiplicative arithmetic function $d_j(n)$}
\begin{lemma}\label{djrec}
	Let $j,n \in \mathbb{N} $, $j \ge 2$. Then the $j$th divisor function $d_j(n)$ satisfies the sum-over-divisors recurrence relation
	\[d_j(n)=\sum_{m|n}d_{j-1}(m) .\]
\end{lemma}
	\begin{proof}
In the general
ordered factorisation in $j$ factors, $n = m_1 m_2\cdots m_j$,  $m= \frac{n}{m_j}=m_1 \cdots m_{j-1} $ can be any divisor of $n$, and $m_1\cdots m_{j-1} $ is any ordered factorisation of $m$. Hence there are $\sum_{m|n}d_{j-1}(m) $ distinct ordered factorisations of $n$ in $j$ factors.
	\end{proof}

\begin{lemma}\label{bcrt}
Let $p_1, \dots, p_t$ be distinct primes, $t\in\NN$.
Then, for any $j\in\NN$,
\begin{equation}
\label{bcrtf}
d_j(p_1^{a_1} p_2^{a_2} \dots p_t^{a_t}) = \prod_{k=1}^{t}  {a_k +j -1 \choose a_k} \quad (a_1, \dots, a_t\in\mathbb{N}_0).
\end{equation}
\end{lemma}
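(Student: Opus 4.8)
The plan is to reduce the multi-prime statement to the single prime-power case by exploiting multiplicativity, and then to count ordered factorisations of a prime power directly by a stars-and-bars argument. First I would establish that $d_j$ is multiplicative, i.e.\ $d_j(mn) = d_j(m)\,d_j(n)$ whenever $\gcd(m,n)=1$. The key observation is that if $mn = m_1 m_2 \cdots m_j$ is an ordered factorisation with $m,n$ coprime, then each factor $m_i$ splits uniquely as $m_i = u_i v_i$ with $u_i \mid m$ and $v_i \mid n$; collecting the $u_i$ and the $v_i$ separately yields ordered factorisations $m = u_1\cdots u_j$ and $n = v_1\cdots v_j$, and conversely any such pair recombines to a unique factorisation of $mn$. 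This bijection gives multiplicativity, so it suffices to verify the formula when $n = p^a$ is a single prime power, in which case the right-hand side reduces to $\binom{a+j-1}{a}$.

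For the prime-power case, an ordered factorisation $p^a = m_1 m_2 \cdots m_j$ forces each $m_i = p^{e_i}$ with $e_i \in \mathbb{N}_0$ and $e_1 + e_2 + \cdots + e_j = a$. Thus $d_j(p^a)$ equals the number of weak compositions of $a$ into $j$ non-negative parts, which by the standard stars-and-bars count is $\binom{a+j-1}{j-1} = \binom{a+j-1}{a}$. Multiplying over the distinct prime factors of $n$ then yields the claimed product formula.

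The main obstacle is the multiplicativity step: one must check that the splitting $m_i = u_i v_i$ is genuinely unique, which is exactly where the coprimality of $m$ and $n$ is used, and that the correspondence is a bijection so that no factorisations are double-counted or missed. Everything else is routine counting. As an alternative route that sidesteps proving multiplicativity separately, one could argue by induction on $j$ using the recurrence $d_j(n) = \sum_{m\mid n} d_{j-1}(m)$ from Lemma \ref{djrec}: summing over divisors $m = p_1^{b_1}\cdots p_t^{b_t}$ factors as a product of sums over the exponents, and each inner sum $\sum_{b=0}^{a_k}\binom{b+j-2}{b} = \binom{a_k+j-1}{a_k}$ collapses by the hockey-stick identity, reproducing the formula directly.
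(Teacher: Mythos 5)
Your proposal is correct, but your primary argument takes a genuinely different route from the paper's. The paper proves the formula by induction on $j$: the case $j=1$ is trivial, and the inductive step applies the sum-over-divisors recurrence of Lemma \ref{djrec}, identifies divisors of $p_1^{a_1}\cdots p_t^{a_t}$ with exponent tuples $0\le\tilde a\le a$, factors the resulting sum into a product of single-exponent sums, and collapses each one by the hockey-stick identity $\sum_{l=0}^{a_i}\binom{l+j-1}{l}=\binom{a_i+j}{a_i}$ --- which is precisely the alternative you sketch in your closing sentences. Your main route instead proves multiplicativity of $d_j$ first, via the bijection sending a factorisation $mn=m_1\cdots m_j$ to the pair of factorisations with factors $u_i=\gcd(m_i,m)$ and $v_i=\gcd(m_i,n)$, and then counts ordered factorisations of a prime power $p^a$ by stars and bars as weak compositions of $a$ into $j$ parts, giving $\binom{a+j-1}{a}$. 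Both arguments are sound; your bijection and its uniqueness do hold exactly because of coprimality, and the recombination step (that $\prod_i u_i=m$ and $\prod_i v_i=n$) follows by comparing prime supports. Note that your route reverses the paper's logical order: there, multiplicativity (Corollary \ref{cor3}) is deduced \emph{from} the product formula, whereas you establish it independently and use it as an ingredient --- this is not circular, since your bijection nowhere invokes the formula. What your approach buys: it isolates the structural content (multiplicativity plus a transparent one-prime count), and the multiplicativity of $d_j$ comes for free rather than as a corollary. What the paper's approach buys: it reuses the already-established Lemma \ref{djrec}, avoids verifying a bijection, and reduces the whole inductive step to a single known combinatorial identity.
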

\begin{proof}[by induction on $j$]
For $j=1$ the formula is trivial.
Suppose $j\in\NN$ is such that (\ref{bcrtf}) holds. Note that if
$n = \prod\limits_{i=1}^k p_i^{a_i}$, then
$m | n$ if and only if
$m = \prod\limits_{i=1}^k p_i^{\tilde a_i}$
with $0 \le \tilde a_i \le a_i$ for all $i \in\{1, \dots, k\}$.
Using multiindex notation, we can write write the latter condition in the form
$0 \le \tilde a \le a$.
Hence, by Lemma \ref{djrec},
\begin{align*}
d_{j+1}(p_1^{a_1} \cdots p_k^{a_k}) &= \sum_{0 \le\tilde a \le a} d_j(p_1^{\tilde a_1} \cdots p_k^{\tilde a_k})
= \sum_{0 \le\tilde a \le a} \prod_{i=1}^k {\tilde a_i + j-1 \choose \tilde a_i}
= \prod_{i=1}^k \sum_{l=0}^{a_i} {l + j-1 \choose l}
\\
&= \prod_{i=1}^k {a_i + j \choose a_i},
\end{align*}
using combinatorial identity (1.49) of \cite{rHWG} in the last step.
\end{proof}
\begin{corollary}\label{cor3}
For any $j\in\NN$, the $j$th divisor function $d_j$ is a multiplicative arithmetic function, i.e.\ $d_j(mn)=d_j(m)d_j(n)$, whenever $(m,n)=1$.
\end{corollary}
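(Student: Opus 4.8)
The plan is to deduce multiplicativity directly from the explicit product formula established in Lemma \ref{bcrt}, since that formula already exhibits $d_j$ as a product over the distinct prime powers dividing its argument. The only structural input needed is that coprimality of $m$ and $n$ is equivalent to their having disjoint sets of prime divisors.

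First I would write the prime factorisations of the two coprime arguments using distinct primes on each side. Suppose $(m,n)=1$, and write $m = p_1^{a_1} \cdots p_s^{a_s}$ and $n = q_1^{b_1} \cdots q_u^{b_u}$, where the $p_i$ and $q_l$ are prime. Because $(m,n)=1$, no prime divides both $m$ and $n$, so the combined list $p_1, \dots, p_s, q_1, \dots, q_u$ consists of pairwise distinct primes. Consequently the factorisation of the product is simply $mn = p_1^{a_1} \cdots p_s^{a_s} q_1^{b_1} \cdots q_u^{b_u}$, with each prime appearing to its original exponent.

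Now I would apply Lemma \ref{bcrt} three times. Evaluating the formula (\ref{bcrtf}) on $mn$, whose prime support is the disjoint union of those of $m$ and $n$, gives
\[
d_j(mn) = \prod_{i=1}^{s} \binom{a_i + j - 1}{a_i} \; \prod_{l=1}^{u} \binom{b_l + j - 1}{b_l}.
\]
The two factors on the right are exactly $d_j(m)$ and $d_j(n)$ by the same lemma, so $d_j(mn) = d_j(m)\, d_j(n)$, which is the claim. The boundary case where $m=1$ or $n=1$ is covered automatically, since the corresponding product is empty and $d_j(1)=1$.

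I do not expect any genuine obstacle here: the content of the corollary is entirely contained in Lemma \ref{bcrt}, and the argument is just the observation that a product indexed over primes splits along a partition of those primes induced by coprimality. The only point requiring a word of care is the equivalence between $(m,n)=1$ and disjointness of prime supports, which is what makes the exponents of $mn$ agree with those of $m$ and $n$ separately; once that is noted, the factorisation of the binomial-coefficient product is immediate.
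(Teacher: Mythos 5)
Your proof is correct and follows essentially the same route as the paper, which likewise deduces the corollary directly from the product formula (\ref{bcrtf}) of Lemma \ref{bcrt} by noting that coprime $m$ and $n$ have disjoint prime supports, so the product over the primes of $mn$ splits into the products for $m$ and $n$. Your write-up simply makes explicit the factorisation bookkeeping and the trivial case $m=1$ or $n=1$ that the paper leaves implicit.
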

\noindent
This follows directly from (\ref{bcrtf}), considering that $m$ and $n$ have no
prime in common, so the right-hand side can be rearranged into two products
with disjoint index sets. The arithmetic function $d_j$
is not totally multiplicative. For example,
$d_3(20)=18\neq 27=3\times 9=d_3(2)d_3(10)$.

\medskip
\noindent
In the following, we
denote by $\zeta$ the Euler-Riemann zeta function (see e.g.~\cite{rHME} or \cite{tit1}),
\[\zeta(s)= \sum^{\infty}_{n=1}\frac{1}{n^s} = \prod_{p \ {\rm prime}} \left( 1- \frac{1}{p^s} \right)^{-1}
\qquad (\mathop{\rm Re} s > 1).
\]
\begin{lemma}\label{3}
For $j\in\NN$, the divisor function $d_j$ has the Dirichlet series
	\[\sum_{n=1}^\infty \frac{d_j(n)}{n^s}=\zeta(s)^j. \]
\end{lemma}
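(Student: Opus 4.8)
The plan is to induct on $j$, reading the recurrence of Lemma~\ref{djrec} as a Dirichlet convolution and using that Dirichlet convolution corresponds to multiplication of Dirichlet series. The base case $j=1$ is immediate, since $d_1\equiv 1$ has Dirichlet series $\sum_{n=1}^\infty n^{-s}=\zeta(s)$ for $\mathop{\rm Re} s>1$.

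For the inductive step I would regard Lemma~\ref{djrec}, $d_j(n)=\sum_{m\mid n}d_{j-1}(m)$, as the statement that $d_j$ is the Dirichlet convolution of $d_{j-1}$ with the constant arithmetic function $1$. Writing out $\sum_n d_j(n)\,n^{-s}$ and reindexing the inner divisor sum via the substitution $n=m\ell$, with $m$ and $\ell$ ranging independently over $\NN$, the double sum factors as
\[
\Bigl(\sum_{m=1}^\infty \frac{d_{j-1}(m)}{m^s}\Bigr)\Bigl(\sum_{\ell=1}^\infty \frac{1}{\ell^s}\Bigr)=\zeta(s)^{j-1}\,\zeta(s)=\zeta(s)^{j},
\]
where the inductive hypothesis identifies the first factor and the second factor is $\zeta(s)$. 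This closes the induction.

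The only real (though routine) obstacle is justifying that this reindexing and factorisation are legitimate. This holds for $\mathop{\rm Re} s>1$ by absolute convergence: the elementary divisor bound $d_j(n)=O(n^\varepsilon)$ guarantees that $\sum_n d_j(n)\,n^{-s}$ converges absolutely there, so its terms may be rearranged freely, and $n\mapsto(m,\,n/m)$ sets up the required bijection between integers $n=m\ell$ (counted over the divisors of $n$) and independent pairs $(m,\ell)\in\NN\times\NN$.

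As an alternative I would note that the result also follows directly from the Euler product: by Corollary~\ref{cor3} the Dirichlet series factors over primes as $\prod_p\sum_{a=0}^\infty d_j(p^a)\,p^{-as}$, and Lemma~\ref{bcrt} gives $d_j(p^a)=\binom{a+j-1}{a}$, so each local factor is the negative binomial series $\sum_{a=0}^\infty\binom{a+j-1}{a}(p^{-s})^a=(1-p^{-s})^{-j}$; taking the product over all primes then recovers $\zeta(s)^j$. I would favour the convolution argument as the more direct use of the lemmas just established.
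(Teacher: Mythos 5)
Your proposal is correct, and it reaches the result by a route that differs from the paper's in organisation rather than in substance. The paper does not induct: it unrolls Lemma~\ref{djrec} completely, writing $d_j(n)$ as the number of ordered $j$-tuples $(n_1,\dots,n_j)\in\mathbb{N}^j$ with $n_1\cdots n_j=n$, and then factors the resulting $j$-fold sum into $\prod_{k=1}^j\sum_{n_k}n_k^{-s}=\zeta(s)^j$ in a single step. You instead apply Lemma~\ref{djrec} once per inductive step, reading it as Dirichlet convolution with the constant function $1$, so that each step multiplies the series by one factor of $\zeta(s)$. Both arguments rest on exactly the same rearrangement of an absolutely convergent multiple sum for $\mathop{\rm Re} s>1$; you make the convergence justification explicit (via the divisor bound), whereas the paper leaves it implicit, which is a small point in your favour. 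It is also worth noting that your convolution mechanism is precisely what the paper itself uses later, in the proof of Lemma~\ref{mcl8} for $c_j^{(r)}$, so your proof fits naturally into the paper's toolkit. Your secondary route via the Euler product is the only genuinely different argument: combining Corollary~\ref{cor3}, Lemma~\ref{bcrt} and the negative binomial series $\sum_{a=0}^\infty\binom{a+j-1}{a}x^a=(1-x)^{-j}$ gives the local factors $(1-p^{-s})^{-j}$ directly, at the cost of invoking the Euler-product theorem for multiplicative arithmetic functions, which neither the paper's proof nor your primary argument needs; there is also a mild circularity concern in that the paper derives Corollary~\ref{cor3} as a stepping stone in the same section, so the convolution argument is the better choice as your default, as you say.
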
.

\begin{proof}	
By repeated application of Lemma \ref{djrec},
	\[d_j(n)=\sum_{n_1|n}\sum_{n_2|n_1} \ldots \sum_{n_{j-1} | n_{j-2}} d_1(n_{j-1})
 = \sum_{(n_1, \dots, n_j)\in\NN^j : \prod_{k=1}^j n_k = n} 1.
\]	
Hence the Dirichlet series for $d_j$ takes the form
\begin{align*}
	\sum_{n=1}^{\infty} \frac{d_j(n)}{n^s}
&= \sum_{n=1}^\infty \rc{n^s} \sum_{(n_1, \dots, n_j)\in\NN^j : \prod_{k=1}^j n_k = n} 1
=\sum_{n_1=1}^{\infty}\sum_{n_2=1}^{\infty}\ldots \sum_{n_j=1}^{\infty}\frac{1}{\left(\prod_{k=1}^j n_k\right)^s}
\\
&= \prod_{k=1}^j \sum_{n_k=1}^\infty \rc{n_k^s}
= \zeta(s)^j.
\end{align*}
\end{proof}	
\begin{corollary}\label{multidised}
	For $i,j \in \mathbb{N} $ and suitable $s \in \mathbb{C} $,		
	\[\left(\sum_{n=1}^{\infty}\frac{d_i(n)}{n^s}\right)\left(\sum_{n=1}^{\infty}\frac{d_j(n)}{n^s}\right)= \sum_{n=1} ^{\infty} \frac{d_{j+i}(n)}{n^s}.\]	
\end{corollary}
\begin{remark}
The statements of Lemma \ref{3} and Corollary \ref{multidised} extend to the divisor function $d_0$ defined as
	\[d_0(n)=\left\{\begin{array}{cc}
	1 &  \textrm{if}\ n =1   \\
	0 &  \textrm{if}\ n>1
	\end{array}\right. \qquad (n\in\mathbb{N}). \]
	There are other known Dirichlet series associated with $d_j$ (see for example \cite{rHW} Chapter XVII), such as
	\[  \frac{\zeta(s)^3}{\zeta(2s)} =\sum_{n=1} ^{\infty}\frac{d_2(n^2)}{n^s}, \qquad\textrm{and} \qquad \frac{\zeta(s)^4}{\zeta(2s)}
=\sum_{n=1}^\infty \frac{\left (d_2(n)\right )^2}{n^s},\]
both of which can be obtained from the more general calculation
(where we use the Euler product for the zeta function and Newton's inverse
binomial series in the first two steps, respectively),
\begin{align*}
\frac{\zeta(s)^{r+2}}{\zeta(2s)} &=\prod_p \frac{(1-p^{-s})(1+p^{-s})}{(1-p^{-s})(1-p^{-s})^{r+1}}
=\prod_p (1+p^{-s})\sum_{j=0}^\infty \frac{(j+r)!}{j!r!}p^{-sj}
\\
&=\prod_p \left (1+\sum_{j=1}^\infty \left (\frac{(j+r)!}{j!r!} + \frac{(j+r-1)!}{(j-1)!r!}\right )p^{-sj}\right )
\\
&=\prod_p\sum_{j=0}^\infty \frac{(2j+r)(j+r-1)!}{j!r!}p^{-sj}=\sum_{n=0}^\infty \frac{f(n)}{n^s},
\end{align*}
where
$f$ is the multiplicative arithmetic function
\[
f(p_1^{a_1}p_2^{a_2}\ldots p_t^{a_t})=\prod_{j=1}^t\frac{(2a_j+r)(a_j+r-1)!}{r!a_j!}.
\]
\end{remark}

\noindent
If we define the generating function for the divisor function $d_j$ as
\[
D_j(x) = \sum_{n=1}^\infty d_j(n)\,x^n \qquad (j\in\mathbb{N})
\]
for values of $x$ for which the power series converges, then we have the
following recursive identity which reflects the sum-over-divisors recurrence
of Lemma \ref{djrec}.
\begin{lemma}
Let $j\in\mathbb{N}$. Then for all $x\in\mathbb{C}$ for which the generating function $D_j(x)$ is defined,
\[ D_j(x) = \sum_{k=1}^\infty D_{j-1}(x^k).\]
	
\end{lemma}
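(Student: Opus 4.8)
The plan is to expand the right-hand side as a double power series, regroup it by total degree, and recognise the resulting coefficients through the sum-over-divisors recurrence of Lemma \ref{djrec}. Writing out each generating function explicitly gives
\[
\sum_{k=1}^\infty D_{j-1}(x^k) = \sum_{k=1}^\infty \sum_{m=1}^\infty d_{j-1}(m)\, x^{mk}.
\]
I would then collect the monomials $x^{mk}$ according to the value $n = mk$: for fixed $n$, the contributing pairs are exactly $(m, n/m)$ with $m \mid n$, so the coefficient of $x^n$ is $\sum_{m \mid n} d_{j-1}(m)$. By Lemma \ref{djrec} this equals $d_j(n)$ (for $j \ge 2$), and hence the whole expression reassembles as $\sum_{n=1}^\infty d_j(n)\,x^n = D_j(x)$.

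The step that needs care — and which I expect to be the main obstacle — is the interchange of summation used to pass from the double sum indexed by $(k,m)$ to the single sum indexed by $n$. This is legitimate precisely when the double series converges absolutely. Since the coefficients $d_{j-1}(m)$ are non-negative and grow subexponentially (so that $D_{j-1}$ has radius of convergence $1$), I would restrict to $|x| < 1$ and estimate
\[
\sum_{k=1}^\infty\sum_{m=1}^\infty d_{j-1}(m)\,|x|^{mk}
= \sum_{m=1}^\infty d_{j-1}(m)\,\frac{|x|^m}{1-|x|^m}
\le \frac{1}{1-|x|}\sum_{m=1}^\infty d_{j-1}(m)\,|x|^m
= \frac{D_{j-1}(|x|)}{1-|x|} < \infty,
\]
where the first equality sums the geometric series in $k$ and the inequality uses $1-|x|^m \ge 1-|x|$. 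With absolute convergence established, the regrouping above is justified (by Fubini's theorem, or by unconditional rearrangement in the non-negative real case followed by comparison) for every $x$ with $|x| < 1$, which is exactly the region on which these generating functions are defined.

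Finally I would dispose of the base case $j = 1$ separately, since Lemma \ref{djrec} is stated only for $j \ge 2$. Here $d_1(n) = 1$ for all $n$, so $D_1(x) = \sum_{n=1}^\infty x^n = x/(1-x)$, whereas the right-hand side $\sum_{k=1}^\infty D_0(x^k)$, with $D_0(x) = x$ reflecting the function $d_0$ of the preceding remark, telescopes to $\sum_{k=1}^\infty x^k$ and yields the same value; equivalently, one observes that the recurrence $d_1(n) = \sum_{m \mid n} d_0(m)$ holds, so the argument of the first two paragraphs applies verbatim with $j = 1$. This completes the proof.
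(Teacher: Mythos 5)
Your proof is correct, and at its core it performs the same regrouping of the double sum $\sum_{k,m} d_{j-1}(m)\,x^{mk}$ that the paper's proof does; the differences are in direction and packaging. The paper starts from the left-hand side, expands $d_j(n)$ as the number of ordered $j$-tuples with product $n$, writes $D_j(x)=\sum_{k\in\mathbb{N}^j}x^{k_1\cdots k_j}$, and splits off the last index to recognise $\sum_{k_j=1}^\infty D_{j-1}(x^{k_j})$ --- so Lemma \ref{djrec} is never invoked, the factorisation structure being used directly. You instead start from the right-hand side, collect terms by the total exponent $n=mk$, and quote Lemma \ref{djrec} to identify the coefficient of $x^n$ as $d_j(n)$. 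Your two additions are genuine improvements in rigour over the paper's one-line computation: the paper silently interchanges infinite sums (legitimate for $|x|<1$ by exactly the absolute-convergence estimate you give, but left unstated), and it does not comment on $j=1$, where its inner sum runs over $\mathbb{N}^{0}$ and implicitly needs the convention $D_0(x)=x$ that you spell out via the function $d_0$ from the paper's earlier remark. In short: same mechanism, but your write-up is more careful analytically, while the paper's is more compact and stays at the level of formal rearrangement of ordered factorisations.
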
		
\begin{proof}
We observe that
\begin{align*}
\sum_{n=1}^\infty d_j(n) x^n &= \sum_{k\in\mathbb{N}^j} x^{k_1 \cdots k_j}
= \sum_{k_j=1}^\infty \sum_{k\in\mathbb{N}^{j-1}} x^{(k_1 \cdots k_{j-1}) k_j}
= \sum_{k_j=1}^\infty \sum_{n=1}^\infty d_{j-1}(n) (x^{k_j})^n,
\end{align*}
and the stated identity follows.
\end{proof}

\noindent
We remark that by a similar calculation,
\begin{align*}
\sum_{n=1}^{\infty} d_j(n) x^n
&=\sum_{k \in \mathbb{N}^{j-1}} \sum_{k_j=1}^{\infty}(x^{k_1\ldots k_{j-1}})^{k_j}
=\sum_{k \in \mathbb{N}^{j-1}} \frac{x^{k_1 \ldots k_{j-1}}}{1-x^{k_1 \ldots k_{j-1}}} = \sum_{n=1}^{\infty} d_{j-1}(n) \frac{x^n}{1-x^n}
\end{align*}
provided $x \neq 1$,
however, the right-hand side is then not of the form of a generating function.

\section{The non-trivial divisor function $c_j$}
The arithmetic function $c_j$ satisfies a sum-over-divisors recurrence with respect to $j$ analogous to, but subtly different from, that given for $d_j$ in Lemma \ref{djrec}.
\begin{lemma}\label{cjrec}
	Let $j,n \in \mathbb{N} $, $j \ge 2$. Then the $j$th non-trivial divisor function satisfies the sum-over-divisors recurrence relation
	\[c_j(n)=\mathop{\sum_{m|n}}_{m<n} c_{j-1}(m)=\mathop{\sum_{m|n}}_{m\notin \{1,n\}} c_{j-1}(m) .\]
\end{lemma}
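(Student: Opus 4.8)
The plan is to adapt the combinatorial argument used for Lemma~\ref{djrec} to the non-trivial setting, where the requirement that every factor be at least $2$ is the source of the ``subtle difference''. I would start from a general ordered proper factorisation $n = m_1 m_2 \cdots m_j$ with each $m_i \ge 2$, and single out the last factor by writing $m = m_1 \cdots m_{j-1} = n/m_j$. Since $m_j \ge 2$, this forces $m = n/m_j \le n/2 < n$, so $m$ is a divisor of $n$ strictly smaller than $n$; conversely, any divisor $m\mid n$ with $m < n$ yields $m_j = n/m \ge 2$, a legitimate final factor.

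First I would verify that this correspondence is a bijection. Given the factorisation of $n$, the tuple $(m_1, \dots, m_{j-1})$ is an ordered factorisation of $m$ into $j-1$ factors each $\ge 2$, of which there are exactly $c_{j-1}(m)$. Conversely, for each divisor $m < n$ and each such factorisation of $m$, appending $m_j = n/m \ge 2$ reconstructs an ordered proper factorisation of $n$ into $j$ factors. Summing over the admissible values of $m$ then gives
\[
c_j(n) = \mathop{\sum_{m\mid n}}_{m<n} c_{j-1}(m),
\]
which is the first of the two claimed expressions.

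For the second equality I would observe that the only divisor counted in the first sum but excluded from $\{m\mid n : m\notin\{1,n\}\}$ is $m = 1$ (the divisor $m = n$ being already excluded by $m < n$). Since $j \ge 2$, we have $j-1 \ge 1$, and a product of $j-1$ factors each $\ge 2$ is at least $2^{j-1} > 1$; hence $1$ admits no such factorisation, so $c_{j-1}(1) = 0$. The $m=1$ term therefore contributes nothing, and the two sums agree.

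The main point to get right --- and what distinguishes this from Lemma~\ref{djrec} --- is the role of the endpoint divisors. In the $d_j$ recurrence the final factor may equal $1$, so $m$ runs over \emph{all} divisors, including $m = n$; here the constraint $m_j \ge 2$ removes precisely the top divisor, while the vanishing of $c_{j-1}(1)$ is what lets one harmlessly drop (or retain) the bottom divisor $m = 1$, reconciling the two forms of the sum. I expect no serious obstacle beyond stating this boundary bookkeeping carefully.
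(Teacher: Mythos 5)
Your proof is correct and follows essentially the same route as the paper's: split off the last factor $m_j \ge 2$, identify $m = n/m_j$ as an arbitrary proper divisor of $n$ carrying a non-trivial $(j-1)$-factor factorisation, and use $c_{j-1}(1)=0$ to reconcile the two forms of the sum. Your write-up is simply a more explicit version of the same bijective argument, so there is nothing to flag.
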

	\begin{proof}
In the general ordered non-trivial factorisation of $n$ into $j$ factors, $n = m_1 \dots m_j$, the factor $m_j$ can be any non-trivial divisor of $n$,
so $m= \frac{n}{m_j}=m_1 \dots m_{j-1} $ is any proper divisor of $n$, and $m_1\ldots m_{j-1} $ is any non-trivial ordered factorisation of $m$.
Thus there are $\sum\limits_{m|n,\,m<n}c_{j-1}(m) $ distinct $j$-factor ordered non-trivial factorisations of $n$. The last identity follows as $c_j(1)=0$ for any $j \in \mathbb{N}$.
\end{proof}

\begin{lemma}\label{cjzeta}
		For $j\in\mathbb{N}$, the non-trivial divisor function $c_j$ has the Dirichlet series
	\[\sum_{n=1}^{\infty} \frac{c_j(n)}{n^s} = \left(\zeta(s)-1\right)^j. \]
\end{lemma}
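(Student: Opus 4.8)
The plan is to follow the strategy used for Lemma \ref{3}, merely replacing the full divisor sum by the restricted one. First I would iterate the recurrence of Lemma \ref{cjrec}, together with the base case $c_1(m)=1$ for $m\ge 2$ and $c_1(1)=0$, to arrive at the combinatorial description
\[
c_j(n)=\#\Bigl\{(n_1,\dots,n_j)\in\mathbb{N}^j : n_k\ge 2 \text{ for all } k,\ \prod_{k=1}^j n_k=n\Bigr\},
\]
which simply re-expresses the definition of $c_j$ as a count of ordered factorisations into $j$ factors, each at least $2$.

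Next, for $\operatorname{Re} s>1$ I would substitute this count into the Dirichlet series and interchange the order of summation, writing
\[
\sum_{n=1}^\infty\frac{c_j(n)}{n^s}
=\sum_{\substack{(n_1,\dots,n_j)\in\mathbb{N}^j\\ n_k\ge 2}}\frac{1}{(n_1\cdots n_j)^s}
=\prod_{k=1}^j\sum_{n_k=2}^\infty\frac{1}{n_k^s}.
\]
Recognising the inner sum as $\sum_{m=2}^\infty m^{-s}=\zeta(s)-1$ then immediately yields $(\zeta(s)-1)^j$, as claimed.

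The only point requiring care is the justification of the rearrangement and the factorisation of the multiple sum into a product of single-variable series. As in Lemma \ref{3}, this is licensed by absolute convergence for $\operatorname{Re} s>1$; since every term is non-negative, both the interchange and the splitting follow at once from Tonelli's theorem (equivalently, from the unconditional convergence of series of non-negative terms). I do not anticipate a genuine obstacle here: the structure is identical to the $d_j$ case treated in Lemma \ref{3}, the sole difference being that each factor now ranges over $\{2,3,\dots\}$ rather than all of $\mathbb{N}$, which is exactly what converts each $\zeta(s)$ into $\zeta(s)-1$.
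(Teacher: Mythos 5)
Your proposal is correct and follows essentially the same route as the paper: both rewrite $\sum_n c_j(n)n^{-s}$ as a sum over ordered tuples $(n_1,\dots,n_j)$ with every $n_k\ge 2$, then factor the multiple sum into $j$ copies of $\sum_{m\ge 2}m^{-s}=\zeta(s)-1$. The only cosmetic differences are that you explicitly cite Tonelli to justify the rearrangement (the paper leaves this implicit) and that you route the combinatorial description of $c_j$ through Lemma \ref{cjrec}, whereas the paper takes it directly from the definition.
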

	\begin{proof} We have
	\begin{align*}
	\sum_{n=1}^{\infty} \frac{c_j(n)}{n^s} &= 	\sum_{n=2^j}^{\infty} \frac{c_j(n)}{n^s}   = \sum_{n=2^j}^{\infty} \underbrace{\sum_{n_1} \sum_{n_2} \dots \sum_{n_j}}_{n_1\cdots n_j = n, \ n_i \neq 1} \frac{1}{n^s}  \\
	 & = \sum_{n_1=2}^{\infty} \sum_{n_2=2}^{\infty} \dots \sum_{n_j =2}^{\infty} \frac{1}{(n_1n_2 \cdots n_j)^s} =\left(  \sum_{n_1=2}^{\infty} \frac{1}{n_1^s}\right) \cdots \left(\sum_{n_j=2}^{\infty} \frac{1}{n_j^s}\right) \\
	 &= (\zeta(s)-1)\cdots (\zeta(s)-1) = (\zeta(s)-1)^j.
	\end{align*}
	\end{proof}

\noindent
We remark that, unlike $d_j$, $c_j$ is \emph{not} a multiplicative arithmetic function.
For example,
$(2,5)=1$, and yet \(c_2(10)=2\neq 0 \times 0 = c_2(2) c_2(5).\)

In order to study the less symmetric multiplicative properties of $c_j$, it is useful to express it in terms of its multiplicative cousin $d_j$.
When $j=2$, the non-trivial divisors for any $n$ are all the divisor except $1$ and $n$, and hence
$c_2(n)=d_2(n)-2$
if $n \ge 2$, and $c_2(1) = d_2(1) - 1$.
More generally, there is the following connection between the divisor function
and the non-trivial divisor function.
\begin{lemma}\label{dscj}
	For $j, n\in\NN$,
	\[c_j(n)= \sum^{j}_{i=0}(-1)^{j-i} {j\choose i} d_i(n).\]
\end{lemma}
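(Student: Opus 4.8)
The plan is to read the identity directly off the two Dirichlet series already computed. By Lemma \ref{cjzeta} the Dirichlet series of $c_j$ is $(\zeta(s)-1)^j$, and the binomial theorem gives
\[(\zeta(s)-1)^j = \sum_{i=0}^j \binom{j}{i}(-1)^{j-i}\zeta(s)^i.\]
First I would invoke Lemma \ref{3}, together with the convention $\zeta(s)^0 = 1 = \sum_{n=1}^\infty d_0(n)/n^s$ from the Remark following Corollary \ref{multidised}, to replace each $\zeta(s)^i$ by $\sum_{n=1}^\infty d_i(n)/n^s$. Since the outer sum over $i$ is finite, I can interchange it with the sum over $n$ to obtain
\[\sum_{n=1}^\infty \frac{c_j(n)}{n^s} = \sum_{n=1}^\infty \frac{1}{n^s}\sum_{i=0}^j (-1)^{j-i}\binom{j}{i} d_i(n),\]
valid for $\mathrm{Re}\,s$ large enough that all the series converge absolutely.

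The key step is then to compare coefficients: two Dirichlet series that agree on a half-plane have identical coefficients, by the uniqueness theorem for Dirichlet series. Matching the coefficient of $n^{-s}$ on the two sides yields exactly
\[c_j(n)= \sum_{i=0}^j (-1)^{j-i}\binom{j}{i} d_i(n).\]
I expect the only genuinely delicate point to be the justification of this coefficient comparison, that is, citing uniqueness of Dirichlet coefficients and confirming absolute convergence on a common half-plane, together with the bookkeeping that the $i=0$ term is handled correctly by the $d_0$ convention, where it contributes only at $n=1$.

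As an alternative that sidesteps the analytic issues entirely, I would give a direct inclusion–exclusion argument. Recall that $d_j(n)$ counts ordered factorisations $n=m_1\cdots m_j$ into $j$ positive integer factors, whereas $c_j(n)$ counts those with every $m_k\ge 2$. For $S\subseteq\{1,\dots,j\}$ let $A_S$ be the set of factorisations with $m_k=1$ for all $k\in S$; fixing those $|S|$ entries to be $1$ leaves an ordered factorisation of $n$ into the remaining $j-|S|$ factors, so $|A_S| = d_{j-|S|}(n)$. Inclusion–exclusion over the ``bad'' events that individual factors equal $1$ then gives
\[c_j(n)=\sum_{s=0}^j (-1)^s\binom{j}{s} d_{j-s}(n),\]
and the substitution $i=j-s$ recovers the claimed formula. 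The point to check in this version is the edge case $s=j$, where all factors are forced to $1$: it contributes $d_0(n)$, consistent with $d_0(n)=1$ precisely when $n=1$.
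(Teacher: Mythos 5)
Your first argument is exactly the paper's proof: expand $(\zeta(s)-1)^j$ by the binomial theorem, identify each power $\zeta(s)^i$ as the Dirichlet series of $d_i$ (using Lemmata \ref{cjzeta} and \ref{3} and the $d_0$ convention), and conclude by uniqueness of Dirichlet series coefficients, citing \cite{rHR} as the paper does. This part is correct and needs no further comment.

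Your alternative inclusion--exclusion argument, however, is a genuinely different route that the paper does not take, and it is also correct: for $S\subseteq\{1,\dots,j\}$ the factorisations with $m_k=1$ for all $k\in S$ are in bijection with ordered factorisations of $n$ into the remaining $j-|S|$ factors, so the sieve gives $c_j(n)=\sum_{s=0}^{j}(-1)^s\binom{j}{s}d_{j-s}(n)$, which is the claimed identity after substituting $i=j-s$; the boundary term $s=j$ is handled exactly by $d_0(n)=1$ if $n=1$ and $0$ otherwise. This combinatorial version buys you independence from all analytic considerations --- no half-plane of absolute convergence, no appeal to the uniqueness theorem for Dirichlet series --- and it explains the signs and binomial coefficients structurally rather than as an artefact of expanding $(\zeta(s)-1)^j$. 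The paper's (and your first) proof is shorter given that the two Dirichlet series lemmata are already in place, and it fits the paper's overall strategy of encoding divisor-function identities as identities of Dirichlet series; your sieve argument would stand on its own even in a treatment that never introduces $\zeta$.
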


\begin{proof}
By Lemma \ref{cjzeta} and Lemma \ref{3},
\begin{align*}
	\sum^{\infty}_{n=1} \frac{c_j(n)}{n^s} &= (\zeta(s)-1)^j
=\sum^{j}_{i=0}(-1)^{j-i} {j \choose i} \zeta^{j-i}(s)
	=\sum_{n=1}^{\infty} \frac{1}{n^s} \sum^j_{i=0}(-1)^{j-i} {j \choose i}  d_i(n),
\end{align*}
and the claimed identity follows by the uniqueness of the coefficients of
Dirichlet series (cf. \cite{rHR}).
\end{proof}

\noindent
The formula in Lemma \ref{dscj} extends to $j=0$ if we define $c_0(n) := d_0(n)$
$(n\in\mathbb{N})$.
With this convention, we have the following inversion formula.

\begin{lemma}
	For $j, n\in\NN$,
	\[d_j(n)= \sum_{i=0}^{j} {j \choose i} c_{i}(n) .  \]
\end{lemma}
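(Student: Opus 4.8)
The plan is to mirror the proof of Lemma \ref{dscj}, working at the level of Dirichlet series and invoking the uniqueness of their coefficients. By Lemma \ref{3} the Dirichlet series of $d_j$ is $\zeta(s)^j$, and by Lemma \ref{cjzeta} that of $c_i$ is $(\zeta(s)-1)^i$; the latter remains valid for $i=0$ under the convention $c_0 = d_0$, since both sides then equal the constant function $1$. The single algebraic identity needed is the elementary binomial expansion obtained by writing $\zeta(s) = 1 + (\zeta(s)-1)$, namely $\zeta(s)^j = \bigl((\zeta(s)-1)+1\bigr)^j = \sum_{i=0}^j \binom{j}{i}(\zeta(s)-1)^i$.

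Carrying this out, I would write, for $\mathop{\rm Re} s > 1$ where all the series below converge absolutely,
\[
\sum_{n=1}^\infty \frac{d_j(n)}{n^s} = \zeta(s)^j = \sum_{i=0}^j \binom{j}{i}(\zeta(s)-1)^i = \sum_{i=0}^j \binom{j}{i}\sum_{n=1}^\infty \frac{c_i(n)}{n^s} = \sum_{n=1}^\infty \frac{1}{n^s}\sum_{i=0}^j \binom{j}{i} c_i(n),
\]
where the interchange in the last step is legitimate because only finitely many Dirichlet series are being combined. Comparing coefficients of $n^{-s}$ at the two ends, the uniqueness of Dirichlet series coefficients (cf.\ \cite{rHR}) yields $d_j(n) = \sum_{i=0}^j \binom{j}{i} c_i(n)$ for every $n\in\NN$.

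A purely combinatorial alternative is to deduce the statement directly from Lemma \ref{dscj} by binomial inversion: substituting the formula for $c_i$ into $\sum_{i=0}^j \binom{j}{i} c_i(n)$ and using the identity $\binom{j}{i}\binom{i}{k} = \binom{j}{k}\binom{j-k}{i-k}$, together with the fact that $\sum_{l=0}^{m}(-1)^l\binom{m}{l}$ equals $1$ when $m=0$ and $0$ otherwise, collapses the resulting double sum to $d_j(n)$. Either route works, and I would present the Dirichlet-series version for uniformity with the preceding lemma. I do not anticipate a genuine obstacle here, as the result is the formal inverse of Lemma \ref{dscj}; the only point warranting a word of care is the $i=0$ term, where the convention $c_0 = d_0$ is exactly what makes the $(\zeta(s)-1)^0 = 1$ summand match the Dirichlet series of $c_0$.
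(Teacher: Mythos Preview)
Your proposal is correct and follows essentially the same route as the paper: compute the Dirichlet series of each side using Lemma \ref{3} and Lemma \ref{cjzeta}, apply the binomial expansion $\zeta(s)^j = (1+(\zeta(s)-1))^j$, and conclude by uniqueness of Dirichlet coefficients. The paper simply starts from the right-hand side rather than the left, and your remark on the $i=0$ term and the convention $c_0=d_0$ matches the paper's own comment preceding the lemma.
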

	\begin{proof}
		Considering the Dirichlet series for the right-hand side of the above equation, we find
by Lemma \ref{cjzeta}
\begin{align*}
		 \sum_{n=1}^{\infty} \frac{1}{n^s}\sum_{i=0}^{j} {j \choose i} c_{i}(n) &= \sum_{i=0}^{j} {j \choose i}\sum_{n=1}^{\infty} \frac{1}{n^s} c_{i}(n)
 =\sum_{i=0}^{j} {j \choose i}(\zeta(s)-1)^{i}
\\
 &= (1+(\zeta(s) -1))^j = \zeta(s)^j = \sum_{n=1}^{\infty}\frac{d_j(n)}{n^s}
\end{align*}
by Lemma \ref{3},
		and the result follows by the uniqueness of coefficients of Dirichlet series (cf. \cite{rHR}).
	\end{proof}

\noindent
For the generating function of $c_j$,
\[
C_j(x) = \sum_{n=2^j}^\infty c_j(n) x^n,
\]
there is the following recursive identity.
\begin{lemma}
Let $j\in\mathbb{N}$. Then for all $x\in\mathbb{C}$
for which $C_j(x)$ is defined,
\[
 C_j(x) = \sum_{k=2}^\infty C_{j-1}(x^k).
\]
\end{lemma}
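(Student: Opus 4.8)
The plan is to mirror the proof of the corresponding recursive identity for $D_j$ given just above, replacing each summation over a factor ranging over all of $\mathbb{N}$ by a summation over a factor that is at least $2$; this restriction is precisely the combinatorial content that distinguishes $c_j$ from $d_j$.

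First I would rewrite the generating function as a sum over ordered tuples. Since $c_j(n)$ counts the ordered factorisations $n = k_1 \cdots k_j$ with every $k_i \ge 2$, grouping the tuples $(k_1,\dots,k_j)$ according to their product $n$ yields
\[
C_j(x) = \sum_{n=2^j}^\infty c_j(n)\, x^n = \sum_{\substack{(k_1,\dots,k_j)\in\mathbb{N}^j \\ k_i \ge 2}} x^{k_1 \cdots k_j}.
\]
Next I would peel off the last factor $k_j$. Writing the exponent as $(k_1 \cdots k_{j-1})\,k_j$ and summing first over $k_j$ and then over the remaining $j-1$ factors, each $\ge 2$, gives
\[
C_j(x) = \sum_{k_j=2}^\infty \sum_{\substack{(k_1,\dots,k_{j-1})\in\mathbb{N}^{j-1} \\ k_i \ge 2}} \left(x^{k_j}\right)^{k_1 \cdots k_{j-1}}
= \sum_{k_j=2}^\infty \sum_{m=2^{j-1}}^\infty c_{j-1}(m)\,\left(x^{k_j}\right)^{m},
\]
where in the inner double sum I have again grouped the $(j-1)$-tuples by their product $m$, which ranges from $2^{j-1}$ upward. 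The inner sum is exactly $C_{j-1}(x^{k_j})$, so after relabelling $k_j$ as $k$ the stated identity follows.

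The only analytic point is the rearrangement of the multiple series, which is legitimate because all coefficients $c_i(n)$ are non-negative and each of the series involved converges absolutely on its domain of definition (the common region being $|x|<1$). I do not expect any serious obstacle beyond keeping the lower summation limits $2^{j-1}$ and $2$ correct. The one case worth recording explicitly is the base $j=1$: with the convention $c_0 = d_0$ one has $C_0(x) = x$ and $C_1(x) = \sum_{n\ge 2} x^n$, whence $\sum_{k=2}^\infty C_0(x^k) = \sum_{k=2}^\infty x^k = C_1(x)$, so the identity holds there as well.
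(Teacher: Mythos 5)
Your proof is correct and follows essentially the same route as the paper's: expanding $C_j(x)$ as a multiple sum over ordered tuples of factors, each at least $2$, peeling off the last factor, and regrouping the remaining tuple by its product to recover $c_{j-1}$. The extra remarks on absolute convergence and the degenerate case $j=1$ (via the convention $c_0=d_0$) are sensible additions but do not change the argument.
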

\begin{proof}
\begin{align*}
\sum_{n=2^j}^{\infty}c_j(n) x^n &= \sum_{k_1=2}^\infty \sum_{k_2=2}^\infty \dots \sum_{k_j =2}^\infty x^{k_1 k_2 \cdots k_j}
= \sum_{k_2}^\infty \left(\sum_{k_1=2}^\infty \dots \sum_{k_{j-1}=2}^\infty (x^{k_j})^{k_1\cdots k_{j-1}} \right)
\\
&= \sum_{k_j=2}^\infty \sum_{n=2^{j-1}}^\infty c_{j-1}(n) (x^{k_j})^n
\end{align*}
and the statement follows.
\end{proof}

\noindent
We remark that grouping the terms in the calculation in the preceding proof differently yields
the identity (for $x \neq 1$)
		\begin{align*}
		\sum_{n=2^j}^{\infty}c_j(n) x^n
		&= \sum_{k_1=2}^{\infty} \dots \sum_{k_{j-1}=2}^{\infty} \left(\sum_{k_j=2}^{\infty} (x^{k_1 \cdots k_{j-1}})^{k_j} \right) \\
		=& \sum_{k_1=2}^\infty \dots \sum_{k_{j-1}=2}^\infty \frac{x^{2 k_1 \cdots k_{j-1}}}{1-x^{k_1 \cdots k_{j-1}}}
		= \sum_{n=2^{j-1}}^\infty c_{j-1}(n) \frac{x^{2n}}{1-x^n}.
		\end{align*}

\noindent
We conclude this section with the derivation of a hypergeometric series for $c_j(n)$.
The generalised hypergeometric series has the form
	\[_{k}F_n (a_1,a_2, \dots, a_k; b_1,b_2,\dots, b_n;z)= \sum_{m=0}^{\infty} \frac{a_1^{\overline{m}} \ a_2^{\overline{m}}\cdots a_k^{\overline{m}}\,z^m}{b_1^{\overline{m}} \ b_2^{\overline{m}}\cdots b_n^{\overline{m}}\,m!},  \]
where $a^{\overline{m}}$, with $m\in\mathbb{N}$, is the Pochhammer symbol (rising factorial)
\[
 a^{\overline{m}} = \prod_{j=0}^{m-1}(a+j);
\]
in particular,
$1^{\overline{m}} = m!$,
$2^{\overline{m}} = (m+1)!$,
$a^{\overline{m}} = (a+m-1)!/(a-1)!$ if $a\in\mathbb{N}$ and, for negative $a$,
$a^{\overline{m}} = (-1)^m\,(-a)!/(-a-m)!$ if $-(a+m)\in\mathbb{N}_0$.
By the usual convention on empty products,
$a^{\overline{0}} =1 $.

%

\begin{theorem}\label{thyper}
Let $j\in\mathbb{N}$ and suppose
$n$ has the prime factorisation $n=p_1^{a_1}\dots p_k^{a_k}$.
Then the value of the non-trivial $j$th divisor function at $n$ has the hypergeometric form
	\[c_j(p_1^{a_1}\cdots p_k^{a_k})=(-1)^{1-j}j\,_{k+1}F_{k}(\{a_i+1\}^{k}_{i=1},(1-j);\{1\}_{i=1}^{k-1},2;1). \]
\end{theorem}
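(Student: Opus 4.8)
The plan is to start from the explicit finite-sum representation of $c_j$ obtained by combining the two results already proved for $d_j$, and then to recognise the resulting sum as a terminating hypergeometric series. Concretely, Lemma \ref{dscj} gives $c_j(n)=\sum_{i=0}^j(-1)^{j-i}\binom{j}{i}d_i(n)$, and substituting the product formula of Lemma \ref{bcrt}, namely $d_i(p_1^{a_1}\cdots p_k^{a_k})=\prod_{l=1}^k\binom{a_l+i-1}{a_l}$, yields
\[
c_j(n)=\sum_{i=0}^j(-1)^{j-i}\binom{j}{i}\prod_{l=1}^k\binom{a_l+i-1}{a_l}.
\]
The first thing I would note is that the $i=0$ term vanishes: since each $a_l\ge 1$ we have $\binom{a_l-1}{a_l}=0$, in agreement with $d_0(n)=0$ for $n>1$. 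Hence the sum effectively runs over $i\in\{1,\dots,j\}$, and I assume throughout that $k\ge 1$ with each $a_l\ge 1$ (the case $n=1$ being trivial, as $c_j(1)=0$).

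Next I would reindex by setting $m=i-1$, so that $m$ ranges over $\{0,\dots,j-1\}$, matching the natural summation index of a hypergeometric series. Writing each binomial as a factorial quotient, $\binom{j}{m+1}=\frac{j!}{(m+1)!(j-m-1)!}$ and $\prod_l\binom{a_l+m}{a_l}=\frac{\prod_l(a_l+m)!}{\prod_l a_l!\,(m!)^k}$, and pulling out the global sign $(-1)^{j-1}=(-1)^{1-j}$, transforms the expression into
\[
c_j(n)=(-1)^{1-j}\sum_{m=0}^{j-1}(-1)^m\,\frac{j!}{(m+1)!\,(j-m-1)!}\cdot\frac{\prod_{l=1}^k(a_l+m)!}{\prod_{l=1}^k a_l!\,(m!)^k}.
\]

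The remaining task is to identify the $m$th summand, up to the scalar factor $j$, with the $m$th term of $_{k+1}F_k(\{a_i+1\}_{i=1}^k,1-j;\{1\}_{i=1}^{k-1},2;1)$. Here I would substitute the Pochhammer evaluations recorded just before the theorem: $(a_i+1)^{\overline{m}}=(a_i+m)!/a_i!$, then $1^{\overline{m}}=m!$ (so the $k-1$ lower unit parameters contribute $(m!)^{k-1}$), then $2^{\overline{m}}=(m+1)!$, and crucially $(1-j)^{\overline{m}}=(-1)^m(j-1)!/(j-1-m)!$ for $0\le m\le j-1$. Collecting the powers of $m!$ in the denominator via $(m!)^{k-1}\cdot(m+1)!\cdot m!=(m+1)(m!)^{k+1}$, and using $j=j!/(j-1)!$ to absorb the discrepancy between $j!$ and $(j-1)!$, matches the two expressions term by term; when $k=1$ the empty family $\{1\}_{i=1}^{0}$ of lower parameters simply drops out and the formula reduces to a $_2F_1$.

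I expect the main obstacle to be the careful bookkeeping of the Pochhammer symbols, together with the observation that makes everything finite: because $1-j$ is a non-positive integer, the factor $(1-j)^{\overline{m}}$ contains a zero factor and vanishes for every $m\ge j$, so the a priori infinite hypergeometric series terminates after the term $m=j-1$ and coincides with the finite alternating sum for $c_j(n)$. I would state this termination explicitly before performing the term-by-term comparison, so that the identification of the two finite sums is unambiguous.
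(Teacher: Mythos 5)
Your proposal is correct and takes essentially the same approach as the paper: both proofs combine Lemma \ref{dscj} with Lemma \ref{bcrt} and match the resulting finite alternating sum, term by term via the Pochhammer evaluations (including the key fact that $(1-j)^{\overline{m}}$ vanishes for $m\ge j$, which terminates the series), with the stated ${}_{k+1}F_k$. The only difference is the direction of the computation --- you expand $c_j(n)$ and recognise the hypergeometric series, whereas the paper starts from the hypergeometric expression and reduces it to $c_j(n)$ --- which is immaterial.
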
	
\begin{proof}
Starting from the right-hand side expression, we find
\begin{align*}
(-1)^{1-j}j &\sum_{m=0}^{\infty}\frac{\left(\prod_{i=1}^k (a_i+1)^{\overline{m}}\right)(1-j)^{\overline{m}}}{(1^{\overline{m}})^{k-1}\,2^{\overline{m}}\,m!}
\\
&= (-1)^{1-j} \sum^{j-1}_{m=0}\frac{j\,(j-1)!\,(-1)^m}{(m+1)!\,(j-m-1)!}\prod_{i=1}^k\frac{(a_i+m)!}{a!\,m!} \\
&= \sum^{j-1}_{m=0}(-1)^{m-j+1} {j \choose m+1}\prod_{i=1}^k {a_i+m \choose m}
\\
 &= \sum^{j-1}_{m=0}(-1)^{m-j+1} {j \choose m+1}d_{m+1}(p_1^{a_1}\cdots p_k^{a_k}) =c_j(p_1^{a_1}\cdots p_k^{a_k}),
\end{align*}
	by Lemmata \ref{bcrt} and \ref{dscj}.
	\end{proof}

\noindent
In particular, for a prime power the above theorem gives
\[c_j(p^a)= (-1)^{1-j}j \,_{2}F_1(a+1,1-j;2;1)=
(-1)^{1-j}j \sum_{m=0}^{\infty}\frac{(a+1)^{\overline{m}}(1-j)^{\overline{m}}}{2^{\overline{m}}\,m!}. \]
Finally, we note the following multiplication rule for prime powers.
\begin{lemma}\label{lemd2}
Let $p$ be a prime and $j, a, b \in\mathbb{N}$.
Then
\begin{align*}
c_j(p^{a+b})
&=\sum^{j-1}_{k=0} (-1)^{k-j+1} {j \choose k+1 } d_{k+1}(p^b)\frac{(b+k+1)^{\overline{a}}}{(b+1)^{\overline{a}}} .
\end{align*}
	\end{lemma}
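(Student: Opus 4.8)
The plan is to reduce the claimed identity to the explicit prime-power divisor formula of Lemma \ref{bcrt}, namely $d_i(p^m) = \binom{m+i-1}{i-1}$, followed by an elementary factorial manipulation. First I would apply Lemma \ref{dscj} at $n = p^{a+b}$ to obtain
\[
c_j(p^{a+b}) = \sum_{i=0}^{j} (-1)^{j-i}\binom{j}{i}\,d_i(p^{a+b}).
\]
Since $a, b\in\mathbb{N}$ gives $a+b\ge 2$, the $i=0$ term drops out because $d_0(p^{a+b}) = 0$. Reindexing by $i = k+1$ and noting that the sign satisfies $(-1)^{j-i} = (-1)^{j-k-1} = (-1)^{k-j+1}$ (the two exponents having the same parity) recasts the sum as
\[
c_j(p^{a+b}) = \sum_{k=0}^{j-1} (-1)^{k-j+1}\binom{j}{k+1}\,d_{k+1}(p^{a+b}),
\]
which already matches the shape of the right-hand side of the lemma except that $d_{k+1}(p^{a+b})$ appears where the statement has $d_{k+1}(p^b)\,(b+k+1)^{\overline{a}}/(b+1)^{\overline{a}}$.

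It therefore remains to verify, term by term, the single factorisation
\[
d_{k+1}(p^{a+b}) = d_{k+1}(p^b)\,\frac{(b+k+1)^{\overline{a}}}{(b+1)^{\overline{a}}}.
\]
By Lemma \ref{bcrt} for prime powers, $d_{k+1}(p^{a+b}) = \binom{a+b+k}{k}$ and $d_{k+1}(p^b) = \binom{b+k}{k}$, so this is the purely combinatorial identity $\binom{a+b+k}{k} = \binom{b+k}{k}(b+k+1)^{\overline{a}}/(b+1)^{\overline{a}}$. Writing the rising factorials as ratios of factorials, $(b+k+1)^{\overline{a}} = (b+k+a)!/(b+k)!$ and $(b+1)^{\overline{a}} = (b+a)!/b!$, the right-hand side collapses to $(a+b+k)!/(k!\,(b+a)!) = \binom{a+b+k}{k}$, establishing the factorisation. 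Substituting it into each summand of the displayed sum yields exactly the stated formula.

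The argument is thus essentially routine once the two preceding lemmata are in hand, and I expect no genuine obstacle. The only points requiring care are bookkeeping ones: tracking the sign through the reindexing $i = k+1$, justifying the vanishing of the $d_0$ term (which uses $a+b\ge 2$), and converting between the Pochhammer and factorial forms (which is legitimate because $a\in\mathbb{N}$ makes $(b+k+1)^{\overline{a}}$ and $(b+1)^{\overline{a}}$ genuine finite products). No structural input beyond Lemmata \ref{bcrt} and \ref{dscj} is needed, and in particular the non-multiplicativity of $c_j$ plays no role here since everything is supported on a single prime.
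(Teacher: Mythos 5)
Your proof is correct and follows essentially the same route as the paper's: both arguments combine Lemma \ref{dscj} with the ratio identity $d_i(p^{a+b})/d_i(p^b) = (b+i)^{\overline{a}}/(b+1)^{\overline{a}}$ derived from the factorial form of Lemma \ref{bcrt}. Your write-up merely makes explicit the bookkeeping (dropping the $d_0$ term and reindexing $i=k+1$) that the paper leaves to the reader.
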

\begin{proof}
By Lemma \ref{bcrt},
\begin{align*}
\frac{d_j(p^{a+b})}{d_j(p^b)} &= \frac{(a+b+j-1)!\,b!\,(j-1)!}{(a+b)!\,(j-1)!\,(b+j-1)!}
= \frac{(b+j)^{\overline{a}}}{(b+1)^{\overline{a}}}.
\end{align*}
The statement now follows by combining this result with Lemma \ref{dscj}.
\end{proof}

\section{The associated divisor function $c_j^{(r)} $ }
In analogy to the sum-over-divisors recurrence relation for the divisor function $d_j$ (Lemma \ref{djrec}), we define the
$j$th associated divisor function $c_j^{(r)} $
by the following recurrence.

\begin{definition}
	Let $j\in\mathbb{N}$. Then, for all non-negative integers $r$, the {\it associated divisor function\/} $c_j^{(r)}$ is defined recursively by
	\[c_j^{(0)}(n)= c_j(n), \qquad  c_j^{(r)}(n) = \sum_{m|n}c^{(r-1)}_j(m) \qquad (n\in\mathbb{N}). \]
From the corresponding property of $c_j$, it follows that $c_j^{(r)}(n) = 0$
for all $r\in\mathbb{N}$ if $n < 2^j$.
\end{definition}

\begin{lemma}\label{cjr1}
	Let $j\in\mathbb{N}$ and $r\in\mathbb{N}_0$. Then the associated divisor function $c_j ^{(r)} $ can be expressed in terms of the divisor function $d_j$ as follows,	
	\[c_j^{(r)}(n)= \sum_{i=0}^j (-1)^{j-i} {j \choose i} d_{i+r}(n)\qquad (n\in\mathbb{N}).  \]
\end{lemma}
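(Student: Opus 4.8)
The plan is to prove the formula for $c_j^{(r)}$ by passing to Dirichlet series, exactly as in the proofs of Lemmata \ref{dscj} and \ref{cjzeta}, since the recurrence defining $c_j^{(r)}$ is a repeated sum-over-divisors operation, and such operations correspond to multiplication by $\zeta(s)$ on the level of Dirichlet series. First I would establish the Dirichlet series for $c_j^{(r)}$. Observe that if $g(n) = \sum_{m|n} f(m)$, then $\sum_n g(n)/n^s = \zeta(s)\sum_n f(n)/n^s$; this is the standard fact that the sum-over-divisors operation multiplies the Dirichlet series by $\zeta(s)$. Applying this $r$ times to the defining recurrence $c_j^{(r)}(n) = \sum_{m|n} c_j^{(r-1)}(m)$, and using the base case $c_j^{(0)} = c_j$ together with Lemma \ref{cjzeta}, I obtain
\[
\sum_{n=1}^\infty \frac{c_j^{(r)}(n)}{n^s} = \zeta(s)^r \, (\zeta(s)-1)^j.
\]

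The second step is to expand the right-hand side so as to recognise it as the Dirichlet series of the claimed arithmetic function. Using the binomial theorem on $(\zeta(s)-1)^j = \sum_{i=0}^j \binom{j}{i}(-1)^{j-i}\zeta(s)^i$, multiply through by $\zeta(s)^r$ to get
\[
\zeta(s)^r(\zeta(s)-1)^j = \sum_{i=0}^j (-1)^{j-i}\binom{j}{i}\zeta(s)^{i+r}.
\]
By Lemma \ref{3}, each $\zeta(s)^{i+r}$ is the Dirichlet series of $d_{i+r}$, so the whole expression equals $\sum_{n=1}^\infty n^{-s} \sum_{i=0}^j (-1)^{j-i}\binom{j}{i} d_{i+r}(n)$. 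Comparing the two Dirichlet series and invoking the uniqueness of Dirichlet coefficients (as cited via \cite{rHR} in the earlier lemmas) yields the stated identity.

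The only point requiring a little care — and the place I would expect to be the main obstacle, such as it is — is the base case $r=0$ of the induction establishing the Dirichlet series, together with the treatment of convergence. For $r=0$ the formula must reduce to Lemma \ref{dscj}, which it does since $\zeta(s)^0 = 1$; one should check that the convention $c_0 := d_0$ is not needed here because the index runs $i=0,\dots,j$ with $d_{i+r}$ and $r\ge 0$, so all divisor functions appearing are the genuine $d_m$ with $m\ge 0$. Regarding convergence, all manipulations are valid for $\mathop{\rm Re} s > 1$, where every series involved converges absolutely, so the formal rearrangements (binomial expansion, interchange of summation) are justified; uniqueness of Dirichlet coefficients then transfers the identity from the series to the arithmetic functions for every $n\in\mathbb{N}$. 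An equally clean alternative would be a direct induction on $r$ at the level of the arithmetic functions: assuming the formula for $r-1$, apply $\sum_{m|n}$ and use Lemma \ref{djrec} in the form $\sum_{m|n} d_{i+r-1}(m) = d_{i+r}(m)$ term by term; I would mention this as the more elementary route, but the Dirichlet-series argument is shorter and matches the style of the surrounding proofs.
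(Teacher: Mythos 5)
Your proof is correct, but it is not the paper's argument --- in fact the paper's proof is exactly the ``more elementary route'' you mention in passing at the end: a direct induction on $r$ at the level of arithmetic functions, where the base case $r=0$ is Lemma \ref{dscj}, and the induction step applies $\sum_{m|n}$ to the induction hypothesis, swaps the finite binomial sum with the divisor sum, and uses Lemma \ref{djrec} to convert $\sum_{m|n} d_{i+r}(m)$ into $d_{i+r+1}(n)$. Your main argument instead works through Dirichlet series: you first derive $\sum_{n=1}^\infty c_j^{(r)}(n)\, n^{-s} = \zeta(s)^r(\zeta(s)-1)^j$ from Lemma \ref{cjzeta} and the convolution property --- this is precisely the paper's Lemma \ref{mcl8}, which the paper proves later and independently of Lemma \ref{cjr1}, so your order of deduction creates no circularity --- and then expand binomially, identify $\zeta(s)^{i+r}$ as the Dirichlet series of $d_{i+r}$ via Lemma \ref{3}, and compare coefficients using uniqueness. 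The trade-off: your route yields Lemma \ref{mcl8} as a byproduct and treats all $r$ in one uniform coefficient comparison, matching the style of the paper's proof of Lemma \ref{dscj}; the paper's induction is purely arithmetic, needing no convergence half-plane and no uniqueness theorem for Dirichlet series. One small correction: for $r=0$ the term with $i=0$ is $d_0(n)$, so the convention defining $d_0$ (from the Remark following Corollary \ref{multidised}) \emph{is} needed, both for the statement to make sense and to read $\zeta(s)^0=1$ as the Dirichlet series of $d_0$; your parenthetical claim that only ``genuine'' divisor functions $d_m$ occur is slightly inaccurate on this point, though it is harmless since you correctly observe that the $r=0$ case is just Lemma \ref{dscj}.
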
	
	\begin{proof}[by induction on $r$]
		For $r=0 $ this is the statement of Lemma \ref{dscj}.
Now suppose that $r \in \mathbb{N}$ is such that
		\[ c_j^{(r)}= \sum_{i=0}^j (-1)^{j-i} {j \choose i} d_{i+r}\]		
		holds. Then
		by Lemma \ref{djrec},
\begin{align*}
c_{j}^{(r+1)}(n)&= \sum_{m|n}c_j^{(r)}(m)= \sum_{m|n}\sum_{i=0}^j (-1)^{j-i} {j \choose i} d_{i+r}(m)
\\
 &=\sum_{i=0}^j (-1)^{j-i} {j \choose i}\sum_{m|n} d_{i+r}(m),
\end{align*}
and the claimed statement follows by Lemma \ref{djrec}.
	\end{proof}	
\begin{lemma}\label{mixcjr}
	Let $j\in\mathbb{N}$ and $r\in\mathbb{N}_0$. Then the associated divisor function $c_j^{(r)}$ satisfies
	\[c_j^{(r)}= \sum_{i=0}^r { r \choose i} c_{j+i}. \]
\end{lemma}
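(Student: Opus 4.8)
The plan is to establish the identity
\[
c_j^{(r)}= \sum_{i=0}^r \binom{r}{i} c_{j+i}
\]
by working at the level of Dirichlet series, which has been the consistent engine throughout this section: both sides are arithmetic functions, and matching their Dirichlet series coefficient-by-coefficient suffices by the uniqueness theorem already invoked (cf.\ \cite{rHR}). The key observation is that the two defining operations in play --- summing over divisors (which multiplies the Dirichlet series by $\zeta(s)$) and the non-trivial index shift (which corresponds to a factor of $\zeta(s)-1$) --- both act multiplicatively on the generating series, so the combinatorial identity should fall out of an elementary algebraic manipulation of powers of $\zeta$ and $\zeta-1$.

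First I would compute the Dirichlet series of the left-hand side. By the definition of $c_j^{(r)}$ as the $r$-fold iterated divisor sum of $c_j$, and using that each divisor-sum operation multiplies the Dirichlet series by $\zeta(s)$ (as in the proof of Lemma \ref{3}), together with Lemma \ref{cjzeta} which gives $\sum_n c_j(n)/n^s = (\zeta(s)-1)^j$, I obtain
\[
\sum_{n=1}^\infty \frac{c_j^{(r)}(n)}{n^s} = \zeta(s)^r\,(\zeta(s)-1)^j.
\]
Next I would compute the Dirichlet series of the right-hand side: applying Lemma \ref{cjzeta} termwise gives
\[
\sum_{n=1}^\infty \frac{1}{n^s}\sum_{i=0}^r \binom{r}{i} c_{j+i}(n) = \sum_{i=0}^r \binom{r}{i}(\zeta(s)-1)^{j+i} = (\zeta(s)-1)^j \sum_{i=0}^r \binom{r}{i}(\zeta(s)-1)^i.
\]
The remaining step is purely algebraic: by the binomial theorem the inner sum equals $(1+(\zeta(s)-1))^r = \zeta(s)^r$, so the right-hand series also equals $\zeta(s)^r(\zeta(s)-1)^j$, and the two series coincide.

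I expect no genuine obstacle here, since the result is essentially the binomial-theorem identity $\zeta^r = ((\zeta-1)+1)^r$ dressed up arithmetically; this parallels exactly the trick used in the inversion lemma for $d_j$ earlier. The only point requiring a little care is justifying the interchange of the finite sum over $i$ with the infinite Dirichlet summation on the right-hand side, but this is immediate as the outer sum is finite. One could alternatively prove the statement by pure induction on $r$, combining Lemma \ref{cjr1} with the divisor-sum recurrence, but the Dirichlet-series route is cleaner and mirrors the paper's established style.
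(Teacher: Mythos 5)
Your proof is correct, but it takes a genuinely different route from the paper's. The paper proves this lemma by induction on $r$, entirely at the level of arithmetic functions: the induction step applies the recurrence of Lemma \ref{cjrec} to split $\sum_{m|n} c_{j+i}(m) = c_{j+i}(n) + c_{j+i+1}(n)$, and then Pascal's rule $\binom{r}{i}+\binom{r}{i-1}=\binom{r+1}{i}$ collapses the resulting double sum. Your argument instead matches Dirichlet series: you first establish $\sum_n c_j^{(r)}(n) n^{-s} = \zeta(s)^r(\zeta(s)-1)^j$ (which is in fact Lemma \ref{mcl8}, stated \emph{after} this lemma in the paper, but proved there independently of it --- so no circularity arises), and then reduce the identity to the binomial theorem $\zeta^r = ((\zeta-1)+1)^r$ together with uniqueness of Dirichlet coefficients. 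This mirrors exactly how the paper proves the inversion formula $d_j = \sum_i \binom{j}{i} c_i$, so it is stylistically at home. What the paper's induction buys is that it is purely elementary and termwise, needing no analytic input; what your route buys is brevity and transparency --- the binomial structure of the coefficients is visible at a glance rather than emerging from a Pascal-rule recombination. The only point you should make explicit is that the relevant Dirichlet series all converge in some half-plane (e.g.\ $\mathop{\rm Re} s > 1$, since $c_j^{(r)}(n) \le d_{j+r}(n)$), which is what licenses the appeal to uniqueness; the paper is equally silent on this in its analogous proofs, so the omission is venial.
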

\begin{proof}[by induction on $r$]
The identity is trivial for $r=0$.
	Now assume $r \in \mathbb{N}$ is such that
the above identity holds. Then, using Lemma \ref{cjrec},
\begin{align*}
 c_j^{(r+1)}(n)&= \sum_{m|n}c_j^{(r)}(n) = \sum_{m|n}\sum_{i=0}^r {  r\choose i }  c_{j+i}(m) = \sum_{i=0}^r {  r\choose i } \sum_{m|n} c_{j+i}(m)
\\
&= \sum_{i=0}^r {  r\choose i } \left(c_{j+i}(n) +  \sum_{m|n, \ m \neq 1} c_{j+i}(m)\right) =\sum_{i=0}^r {  r\choose i } \left(c_{j+i}(n) +   c_{j+i+1}(n)\right)
\\
&=\sum_{i=0}^r {r \choose i}c_{j+i}(n) + \sum_{i=1}^{r+1}{ r \choose i-1} c_{j+i}(n)
\\
&= c_j(n) + \sum_{i=1}^{r}\left({r \choose i} + {r \choose i-1 }\right)c_{j+i}(n) + c_{j+r+1}(n)
\\
&= \sum_{i=0}^{r+1} {r+1  \choose i}c_{j+i}(n) \qquad (n\in\mathbb{N}).
\end{align*}
\end{proof}

\begin{lemma}\label{mcl8}
For $j\in\mathbb{N}$ and $r\in\mathbb{N}_0$, the Dirichlet series of the associated divisor function $c_j^{(r)}$ is given by
\[\sum_{n=1}^{\infty} \frac{c_j^{(r)}(n)}{n^s} = \zeta(s)^r (\zeta(s) -1)^j.\]
\end{lemma}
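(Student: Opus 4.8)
The plan is to compute the Dirichlet series of $c_j^{(r)}$ directly from its definition, exploiting the fact that Dirichlet convolution with the constant function $1$ corresponds to multiplication by $\zeta(s)$. The starting point is Lemma \ref{cjzeta}, which gives the base case $\sum_{n=1}^\infty c_j^{(0)}(n)/n^s = (\zeta(s)-1)^j$, and the observation that each successive application of the recurrence $c_j^{(r)}(n) = \sum_{m\mid n} c_j^{(r-1)}(m)$ sums the previous function over divisors.

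First I would record the standard fact that for any arithmetic function $f$ with associated function $g(n) = \sum_{m\mid n} f(m)$, the Dirichlet series satisfy $\sum_n g(n)/n^s = \zeta(s)\sum_n f(n)/n^s$. This follows by writing $n = m\ell$ and factoring the double sum:
\begin{align*}
\sum_{n=1}^\infty \frac{g(n)}{n^s} &= \sum_{n=1}^\infty \frac{1}{n^s}\sum_{m\mid n} f(m) = \sum_{m=1}^\infty \sum_{\ell=1}^\infty \frac{f(m)}{(m\ell)^s} = \left(\sum_{\ell=1}^\infty \frac{1}{\ell^s}\right)\left(\sum_{m=1}^\infty \frac{f(m)}{m^s}\right) = \zeta(s)\sum_{m=1}^\infty \frac{f(m)}{m^s}.
\end{align*}
Applying this with $f = c_j^{(r-1)}$ and $g = c_j^{(r)}$ yields the one-step relation $\sum_n c_j^{(r)}(n)/n^s = \zeta(s)\sum_n c_j^{(r-1)}(n)/n^s$.

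From here the result follows by a trivial induction on $r$: the base case $r=0$ is Lemma \ref{cjzeta}, and the inductive step multiplies the series by a further factor of $\zeta(s)$, so that $\zeta(s)^{r-1}(\zeta(s)-1)^j$ becomes $\zeta(s)^r(\zeta(s)-1)^j$. Alternatively, and perhaps more cleanly, one could bypass the induction entirely by invoking Lemma \ref{cjr1}, which expresses $c_j^{(r)}(n) = \sum_{i=0}^j (-1)^{j-i}\binom{j}{i} d_{i+r}(n)$; taking Dirichlet series term by term and applying Lemma \ref{3} gives $\sum_{i=0}^j (-1)^{j-i}\binom{j}{i}\zeta(s)^{i+r} = \zeta(s)^r\sum_{i=0}^j\binom{j}{i}(-1)^{j-i}\zeta(s)^i = \zeta(s)^r(\zeta(s)-1)^j$ by the binomial theorem.

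There is no real obstacle here; the only point requiring a word of care is the region of convergence, since the manipulation of reordering the double sum and splitting it into a product is only valid where everything converges absolutely, namely for $\mathop{\rm Re} s > 1$. I would state this restriction explicitly, just as the earlier lemmas do, and note that the stated identity then holds as an identity of Dirichlet series in that half-plane.
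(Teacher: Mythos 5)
Your proof is correct and follows essentially the same route as the paper: induction on $r$ with Lemma \ref{cjzeta} as the base case and the fact that summing over divisors multiplies the Dirichlet series by $\zeta(s)$ as the inductive step (the paper simply cites this convolution fact rather than deriving it, as you do). Your alternative via Lemma \ref{cjr1} and the binomial theorem is also valid, but the main argument is the paper's own.
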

\begin{proof}[by induction on $r$]
The case $r=0$ was shown in Lemma \ref{cjzeta}.
For the induction step, note that, by a well-known result on Dirichlet
convolution (see e.g.\ \cite{tit1} p.\ 4), the identity
$c_j^{(r+1)}(n) = \sum\limits_{m|n} c_j^{(r)}(m)$ $(n\in\mathbb{N})$ implies that
	\[\sum_{n=1}^{\infty}\frac{c_j^{(r+1)}(n)}{n^s} = \zeta(s)\sum_{n=1}^\infty \frac{c_j^{(r)}(n)}{n^s}.\]
\end{proof}

\noindent
The sum over divisors taken as the definition of $c_j^{(r)}$ in terms of
$c_j^{(r-1)}$ gives rise to the following recursion formula for the
generating functions
\[
C_j^{(r)}(x) = \sum_{n=2^j}^\infty c_j^{(r)}(n) x^n.
\]

\begin{lemma}
Let $j, r \in \mathbb{N}$. Then for all $x\in\mathbb{C}$ for which $C_j^{(r)}$ is defined,
\[
 C_j^{(r)}(x) = \sum_{k=1}^\infty C_j^{(r-1)}(x^k).
\]
\end{lemma}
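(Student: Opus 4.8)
The plan is to imitate the generating-function computation already carried out for $D_j$ in Section 2, working directly from the defining recurrence $c_j^{(r)}(n) = \sum_{m\mid n} c_j^{(r-1)}(m)$. First I would substitute this recurrence into the definition of $C_j^{(r)}(x)$ and note that the lower summation limit may be taken as $n=1$, since $c_j^{(r)}(n)=0$ for $n<2^j$:
\[
C_j^{(r)}(x) = \sum_{n=2^j}^\infty c_j^{(r)}(n)\, x^n = \sum_{n=1}^\infty \Bigl(\sum_{m\mid n} c_j^{(r-1)}(m)\Bigr) x^n.
\]
This expresses the generating function as a double sum over $n$ and over the divisors $m$ of $n$.

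The key step is the reindexing of this double sum. Each $n$ together with a chosen divisor $m$ determines uniquely the cofactor $k = n/m\in\mathbb{N}$, and conversely, as $n$ ranges over $\mathbb{N}$ and $m$ over the divisors of $n$, the pairs $(k,m)$ range over all of $\mathbb{N}^2$ with $n=km$. Relabelling accordingly and interchanging the order of summation yields
\[
\sum_{m=1}^\infty c_j^{(r-1)}(m) \sum_{k=1}^\infty x^{km} = \sum_{k=1}^\infty \sum_{m=1}^\infty c_j^{(r-1)}(m)\, (x^k)^m = \sum_{k=1}^\infty C_j^{(r-1)}(x^k),
\]
which is precisely the claimed identity. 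The combinatorial rearrangement here is routine and identical in spirit to the recursion for $D_j$ proved earlier.

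The only point requiring care is the interchange of the two infinite summations, and I expect this convergence bookkeeping to be the main (if minor) obstacle. It is licensed by absolute convergence: the coefficients $c_j^{(r-1)}(m)$ are nonnegative and, by Lemma \ref{mixcjr}, bounded above by $2^{\,r-1}d_{j+r-1}(m)$, hence grow subpolynomially, so the power series $C_j^{(r-1)}$ has radius of convergence $1$. For $x$ in the common domain of convergence (in particular $|x|<1$) the double series $\sum_{m,k} c_j^{(r-1)}(m)\,|x|^{km}$ converges, so Tonelli's theorem for nonnegative series justifies the rearrangement and the interchange above, completing the argument.
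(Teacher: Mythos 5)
Your proof is correct and takes essentially the same route as the paper's: substitute the recurrence $c_j^{(r)}(n)=\sum_{m\mid n}c_j^{(r-1)}(m)$ into the generating function and reindex the double sum over pairs $(k,m)$ with $n=km$ to obtain $\sum_{k=1}^\infty C_j^{(r-1)}(x^k)$. The paper performs this rearrangement in one line without comment; your additional justification of the interchange by nonnegativity and Tonelli's theorem is valid and merely makes explicit what the paper leaves implicit.
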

\begin{proof}
We observe
\begin{align*}
\sum_{n=2^j}^\infty c_j^{(r)}(n) x^n &= \sum_{n=2^j}^\infty \sum_{m|n} c_j^{(r-1)}(m) x^n
 = \sum_{k=1}^\infty \sum_{m=2^j}^\infty c_j^{(r-1)}(m) (x^k)^m,
\end{align*}
and the statement follows.
\end{proof}

\noindent
Again, we note that, if $x\neq 1$, then a different arrangement of terms in the above calculation
gives the different identity
		\begin{align*}
		\sum_{n=2^j}^{\infty} c_j^{(r)}(n)x^n & = \sum_{n=2^j}^{\infty}\sum_{m |n} c_j^{(r-1)}(m)x^n
  = \sum_{m=2^j}^{\infty} \sum_{k=1}^\infty c_j^{(r-1)}(m)\,(x^m)^k \\
&= \sum_{n=2^j}^\infty c_j^{(r-1)}(n)\,\frac{x^{n}}{1-x^n}.
		\end{align*}


\noindent
To conclude this section, we prove a binomial form for the value of
$c_j^{(r)}$ at prime powers; this is somewhat analogous to Lemma \ref{bcrt}, but
note that the present function is not multiplicative.
\begin{lemma}\label{binomcd}
Let $j, a \in\mathbb{N}$, $r \in\mathbb{N}_0$, and $p$ a prime. Then
\[
c_j^{(r)}(p^a)=\binom{a+r-1}{j+r-1}.
\]
\end{lemma}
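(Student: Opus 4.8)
The plan is to argue by induction on $r$, exploiting the defining sum-over-divisors recurrence $c_j^{(r)}(n) = \sum_{m\mid n} c_j^{(r-1)}(m)$ together with the hockey-stick identity $\sum_{c=0}^{N}\binom{c}{m} = \binom{N+1}{m+1}$.

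For the base case $r=0$ I would count directly: a non-trivial factorisation $p^a = m_1\cdots m_j$ forces each $m_i = p^{e_i}$ with $e_i\ge 1$ and $e_1+\cdots+e_j = a$, so $c_j(p^a)$ equals the number of compositions of $a$ into $j$ positive parts, namely $\binom{a-1}{j-1}$, which is the claimed value at $r=0$. (Equivalently, one could substitute the prime-power values $d_i(p^a)=\binom{a+i-1}{a}$ from Lemma \ref{bcrt} into Lemma \ref{dscj}, but the combinatorial count is cleaner and self-contained.)

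For the induction step I assume the formula at level $r-1$ for every prime-power argument. Since the divisors of $p^a$ are exactly $p^0,p^1,\dots,p^a$, the recurrence gives $c_j^{(r)}(p^a) = \sum_{b=0}^a c_j^{(r-1)}(p^b)$. The $b=0$ term is $c_j^{(r-1)}(1)=0$, so I may start the sum at $b=1$ and apply the induction hypothesis to obtain $\sum_{b=1}^a \binom{b+r-2}{j+r-2}$. Re-indexing by $c=b+r-2$ turns this into $\sum_{c=r-1}^{a+r-2}\binom{c}{j+r-2}$; because $j\ge 1$ the terms with $c<j+r-2$ vanish, so the lower limit may be pushed down to $c=0$, and the hockey-stick identity then yields $\binom{a+r-1}{j+r-1}$, as required.

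The main obstacle — really the only delicate point — will be the bookkeeping at the boundary. One must discard the $n=1$ (i.e.\ $b=0$) contribution before invoking the induction hypothesis, since $c_j^{(r-1)}(1)=0$ while the naive formula $\binom{r-2}{j+r-2}$ would involve a negative top argument when $r=1$; isolating this term also guarantees that the re-indexed sum has a non-negative lower index, so that ordinary (non-generalised) binomial coefficients and the standard hockey-stick identity apply without sign subtleties. An alternative, non-inductive route would be to feed $d_i(p^a)=\binom{a+i-1}{a}$ into Lemma \ref{cjr1} and prove the single finite-difference identity $\sum_{i=0}^j(-1)^{j-i}\binom{j}{i}\binom{a+r+i-1}{a}=\binom{a+r-1}{j+r-1}$ directly; I would expect this to require a Vandermonde- or generating-function manipulation and to be marginally less transparent than the inductive argument above.
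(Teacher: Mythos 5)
Your proof is correct, but it takes a genuinely different route from the paper's. The paper disposes of the lemma in two lines: it substitutes the prime-power values $d_{i+r}(p^a)=\binom{a+i+r-1}{a}$ from Lemma \ref{bcrt} into the representation $c_j^{(r)}(p^a)=\sum_{i=0}^j(-1)^{j-i}\binom{j}{i}d_{i+r}(p^a)$ of Lemma \ref{cjr1}, and then collapses the alternating sum to $\binom{a+r-1}{a-j}=\binom{a+r-1}{j+r-1}$ by citing Gould's combinatorial identity (3.47) --- precisely the finite-difference identity you flag at the end of your proposal as the alternative, ``less transparent'' route. Your induction on $r$ avoids that identity entirely: the base case is a self-contained count of compositions of $a$ into $j$ positive parts giving $c_j(p^a)=\binom{a-1}{j-1}$, and the step uses only the defining recurrence over the divisors $p^0,\dots,p^a$ together with the hockey-stick identity; your treatment of the $b=0$ boundary term is correct and genuinely needed (for $r=1$ the naive formula would have a negative upper entry, and discarding that term also keeps the re-indexed sum within the range where the standard identity applies). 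What your approach buys is elementarity and combinatorial transparency: it bypasses Lemma \ref{cjr1} and Lemma \ref{dscj} (and hence the Dirichlet-series machinery behind them), and it explains directly why a single binomial coefficient appears. What the paper's approach buys is brevity given the tools already developed, exhibiting the lemma as an immediate specialisation of the general formula of Lemma \ref{cjr1} rather than as a separate inductive fact.
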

\begin{proof}
From Lemma \ref{cjr1} and Lemma \ref{bcrt} we find
\begin{align*}
c^{(r)}_j(p^a) &= \sum_{i=0}^j (-1)^{j-i} {j \choose i}d_{i+r}(p^a)= \sum_{i=0}^j (-1)^{j-i} {j \choose i} {a +i+r -1 \choose a}
\\
 &= {a + r - 1 \choose a - j} = {a + r -1 \choose j + r - 1}
\end{align*}
by combinatorial identity (3.47) of \cite{rHWG}.
\end{proof}
\section{Ratios of Divisor Functions}

\noindent
The divisor function, non-trivial divisor function and associated divisor
functions are in the following ordering relation to each other.
\begin{lemma}\label{lorder}
For any $j\in\mathbb{N}$ and $r\in\mathbb{N}$,
\[ c_j^{(r-1)}(n) \le c_j^{(r)}(n); \]
in particular,
\[ c_j(n) \le c_j^{(r)}(n) \qquad (n\in\mathbb{N}). \]
Moreover,
\[ c_j^{(r)}(n) \le d_{j+r}(n) \qquad (n\in\mathbb{N}). \]
\end{lemma}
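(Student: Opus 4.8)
The plan is to derive all three inequalities from a single elementary fact: every associated divisor function is non-negative, being ultimately a count of factorisations. First I would record that $c_j^{(r)}(n) \ge 0$ for all $j, r, n$, by induction on $r$. The base case $c_j^{(0)} = c_j \ge 0$ holds because $c_j(n)$ counts ordered non-trivial factorisations, and the inductive step is immediate since $c_j^{(r)}(n) = \sum_{m \mid n} c_j^{(r-1)}(m)$ is then a sum of non-negative terms.

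For the first inequality I would isolate the $m = n$ term of the defining recurrence: for $r \in \mathbb{N}$,
\[
c_j^{(r)}(n) = \sum_{m \mid n} c_j^{(r-1)}(m) = c_j^{(r-1)}(n) + \mathop{\sum_{m \mid n}}_{m < n} c_j^{(r-1)}(m) \ge c_j^{(r-1)}(n),
\]
the remaining terms being non-negative by the preliminary observation. The ``in particular'' statement $c_j(n) \le c_j^{(r)}(n)$ then follows by chaining, $c_j(n) = c_j^{(0)}(n) \le c_j^{(1)}(n) \le \dots \le c_j^{(r)}(n)$.

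For the third inequality I would work with Dirichlet series, in keeping with the rest of the section. By Lemma \ref{3} and Lemma \ref{mcl8}, the difference $d_{j+r} - c_j^{(r)}$ has the Dirichlet series
\[
\zeta(s)^{j+r} - \zeta(s)^r(\zeta(s)-1)^j = \zeta(s)^r\left(\zeta(s)^j - (\zeta(s)-1)^j\right) = \sum_{i=0}^{j-1} {j \choose i}\, \zeta(s)^r (\zeta(s)-1)^i,
\]
where the last step expands $\zeta^j = ((\zeta-1)+1)^j$ and discards the top term. Recognising each summand $\zeta^r(\zeta-1)^i$ as the Dirichlet series of $c_i^{(r)}$ (Lemma \ref{mcl8}, using the convention $c_0 := d_0$, so that $c_0^{(r)} = d_r$), the uniqueness of Dirichlet coefficients yields
\[
d_{j+r}(n) - c_j^{(r)}(n) = \sum_{i=0}^{j-1} {j \choose i}\, c_i^{(r)}(n) \ge 0,
\]
again by the non-negativity established above.

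None of these steps presents a genuine obstacle; the result is essentially a monotonicity statement. The one point requiring care is the non-negativity claim on which everything rests, together with the bookkeeping in the Dirichlet identity at the index $i = 0$ (where one must invoke $c_0^{(r)} = d_r \ge 0$); this is why I would prove non-negativity first by induction rather than take it for granted. As a transparent sanity check I would also note the combinatorial reading: $c_j^{(r)}(n)$ counts the ordered factorisations of $n$ into $j+r$ factors in which a distinguished $j$ of them are required to be $\ge 2$ (seen by iterating the recurrence, or directly from its Dirichlet series), and these form a subset of the ordered $(j+r)$-factorisations counted by $d_{j+r}(n)$, which makes the final inequality immediate.
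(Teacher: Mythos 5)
Your proof is correct, but your route to the final inequality is genuinely different from the paper's. The first inequality (isolating the $m=n$ term of the recurrence, with non-negativity making the remaining terms harmless) and the chaining for the ``in particular'' statement are exactly what the paper does, though the paper leaves the non-negativity of $c_j^{(r)}$ implicit where you prove it by induction --- a reasonable precaution. For $c_j^{(r)}(n) \le d_{j+r}(n)$, however, the paper argues by induction on $r$: it starts from the counting fact $c_j(m) \le d_j(m)$ (every non-trivial factorisation is a factorisation) and propagates it through the sum-over-divisors recurrences, $d_{j+r+1}(n) = \sum_{m|n} d_{j+r}(m) \ge \sum_{m|n} c_j^{(r)}(m) = c_j^{(r+1)}(n)$. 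You instead prove the exact identity
\[
d_{j+r}(n) - c_j^{(r)}(n) = d_r(n) + \sum_{i=1}^{j-1}\binom{j}{i}\, c_i^{(r)}(n),
\]
obtained from the binomial expansion of $\zeta(s)^{j+r} - \zeta(s)^r(\zeta(s)-1)^j$ and uniqueness of Dirichlet coefficients, using Lemmata \ref{3} and \ref{mcl8} (both available at this point in the paper) together with the convention $c_0 = d_0$, which gives $c_0^{(r)} = d_r$. This is a different decomposition: it buys an explicit formula for the gap $d_{j+r} - c_j^{(r)}$, not merely its sign, and it stays within the Dirichlet-series calculus the paper uses throughout; the cost is reliance on coefficient uniqueness (harmless here, since all the series involved converge absolutely for $\mathop{\rm Re} s > 1$), whereas the paper's induction is purely elementary and self-contained. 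Your closing combinatorial remark --- that $c_j^{(r)}(n)$ counts ordered $(j+r)$-factorisations of $n$ in which $j$ designated factors are required to be at least $2$, read off from the series $\zeta^r(\zeta-1)^j$ --- is in fact the shortest complete proof of the final inequality, and it would be worth promoting from sanity check to the argument itself.
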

\begin{proof}
For the first statement, it is sufficient to note that $n | n$, so
\[ c_j^{(r)}(n) = \sum_{m|n} c_j^{(r-1)}(m) \ge c_j^{(r-1)}(n) \qquad (n\in\mathbb{N}). \]
We prove the final statement by induction on $r$.
As every non-trivial factorisation is a factorisation,
it follows directly from their definitions that
$d_j(m) \ge c_j(m)$
for all $m, j\in\mathbb{N}$.
Hence
\[ d_{j+1}(n) = \sum_{m|n} d_j(m) \ge \sum_{m|n} c_j(m) = c_j^{(1)}(n), \]
which gives the case $r=1$. Now suppose $r\in\mathbb{N}$ is such that the
claimed statement holds. Then
\[ d_{j+r+1}(n) = \sum_{m|n} d_{j+r}(m) \ge \sum_{m|n} c_j^{(r)}(n) = c_j^{(r+1)}(n), \]
which is the induction step.
\end{proof}
%

\noindent
Lemma \ref{lorder} shows that, for any $r\in\mathbb{N}_0$,
the normalised divisor ratio function $c_j^{(r)}/d_{j+r}$ takes rational values between 0~and~1, with the zeros occurring exactly when $j>\Omega(n)$.
We have the following formulae for this function and the similar ratio
$c_j^{(r)}/d_r$.

\begin{theorem}\label{cjr2}
Let $j\in\mathbb{N}$ and $r\in\mathbb{N}_0$, and suppose $n\in\mathbb{N}$ has
prime factorisation
$n=p_1^{a_1} \cdots p_t^{a_t} $. Then
\begin{align}
\frac{c_j^{(r)}(n)}{d_{j+r}(n)} &=  \sum_{i=0}^j (-1)^i {j \choose i} \frac{{ j+r-1  \choose i}^t}{\prod _{k=1}^t {a_k +j +r -1 \choose i}} \nonumber \\
 &=   {}_{t+1}F_t(\{1-j-r\}_{i=1}^t,-j;\{1-a_i-j-r \}^{t}_{i=1};1).
\label{eq:thm12}
\end{align}
Also, for $r \ge 1$
\begin{align}
 \frac{c_j^{(r)}(n)}{d_{r}(n)} &= \sum _{i=0}^j (-1)^{j-i} \binom{j}{i} \frac{\prod _{k=1}^{t}
   \binom{a_k+i+r-1}{i}}{\binom{i+r-1}{i}^{t}}
 \nonumber \\
 &= (-1)^{j}\ {}_{t+1}F_{t}(\{a_k+r\}^{t}_{k=1},-j;\{r\}_{k=1}^{t};1).
\label{eq:thm13}
\end{align}
\end{theorem}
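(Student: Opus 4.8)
The plan is to reduce both identities to Lemma~\ref{cjr1}, which already writes $c_j^{(r)}$ as an alternating binomial combination of the multiplicative functions $d_{i+r}$, and then to evaluate at the prime factorisation $n=p_1^{a_1}\cdots p_t^{a_t}$ by means of the product formula of Lemma~\ref{bcrt}. In each of the two displays the first equality is a finite binomial sum and the second is only its rewriting as a terminating ${}_{t+1}F_t$, so the argument splits into a combinatorial half and a hypergeometric-bookkeeping half.

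For $c_j^{(r)}/d_{j+r}$ I would start from Lemma~\ref{cjr1} and reverse the summation via $i\mapsto j-i$, obtaining $c_j^{(r)}(n)=\sum_{i=0}^j(-1)^i\binom{j}{i}d_{j-i+r}(n)$; this reindexing is what produces the sign $(-1)^i$ in (\ref{eq:thm12}). Dividing by $d_{j+r}(n)$ and inserting Lemma~\ref{bcrt}, the crux is the factorisation
\[
\frac{d_{j-i+r}(n)}{d_{j+r}(n)}=\prod_{k=1}^t\frac{\binom{a_k+j-i+r-1}{a_k}}{\binom{a_k+j+r-1}{a_k}}=\prod_{k=1}^t\frac{\binom{j+r-1}{i}}{\binom{a_k+j+r-1}{i}},
\]
where the second equality comes from writing each factor as a falling-factorial quotient with $N=j+r-1$ and $M=a_k$ and recognising its numerator and denominator as $i!\binom{N}{i}$ and $i!\binom{M+N}{i}$. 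This gives the first line of (\ref{eq:thm12}) at once.

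The computation for $c_j^{(r)}/d_r$ is parallel but uses Lemma~\ref{cjr1} without reindexing, so the sign stays $(-1)^{j-i}$; dividing by $d_r(n)$ and applying Lemma~\ref{bcrt} yields $d_{i+r}(n)/d_r(n)=\prod_k\binom{a_k+i+r-1}{i}/\binom{i+r-1}{i}$ by the same falling-factorial manipulation, which is the first line of (\ref{eq:thm13}). Here the hypothesis $r\ge1$ is exactly what keeps $d_r$ nonzero and the lower parameters $\{r\}$ positive, so that both sides make sense.

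To pass to hypergeometric form I would identify the sum index $i$ with the series index $m$ and convert every Pochhammer symbol by the formulae recorded before Theorem~\ref{thyper}: $(-j)^{\overline{m}}/m!=(-1)^m\binom{j}{m}$, which also forces termination at $m=j$; $(1-j-r)^{\overline{m}}=(-1)^m m!\binom{j+r-1}{m}$ and $(1-a_k-j-r)^{\overline{m}}=(-1)^m m!\binom{a_k+j+r-1}{m}$ for (\ref{eq:thm12}); and $(a_k+r)^{\overline{m}}$, $r^{\overline{m}}$ as factorial ratios for (\ref{eq:thm13}). I expect the sign and factorial bookkeeping here to be the main obstacle: in (\ref{eq:thm12}) the $t$ numerator Pochhammers contribute $(-1)^{mt}$ in total, as do the $t$ denominator ones, and together with the $(-1)^m$ from $(-j)^{\overline{m}}$ these must be checked to collapse to a single $(-1)^m$ independently of the parity of $t$; in (\ref{eq:thm13}) the prefactor $(-1)^j$ must combine with $(-1)^m$ to reproduce $(-1)^{j-i}$. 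The $(m!)^t$ factors cancel cleanly and each surviving factorial ratio is precisely the binomial coefficient in the first lines, so once the signs are verified the two expressions in each display agree term by term.
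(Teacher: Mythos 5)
Your proposal is correct and follows essentially the same route as the paper's proof: both start from Lemma~\ref{cjr1} (with the index reversal $i\mapsto j-i$ for the first identity and no reversal for the second), evaluate $d_{i+r}$ via Lemma~\ref{bcrt}, rewrite the resulting factorial ratios as the binomial quotients $\binom{j+r-1}{i}/\binom{a_k+j+r-1}{i}$ and $\binom{a_k+i+r-1}{i}/\binom{i+r-1}{i}$, and then convert to Pochhammer form, using $(-j)^{\overline{m}}=0$ for $m>j$ to extend the sum to a full ${}_{t+1}F_t$ series. Your sign bookkeeping (the $(-1)^{mt}$ factors cancelling between numerator and denominator, and $(-1)^j(-1)^m=(-1)^{j-m}$) checks out, so there is no gap.
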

\begin{proof}
By Lemma \ref{cjr1} and Lemma \ref{bcrt}, we have that
\begin{align}
c_j^{(r)}(n) &=\sum_{i=0}^{j}(-1)^i {j \choose i} d_{j+r-i}(n)= \sum_{i=0}^{j}(-1)^i {j \choose i}\prod_{k=1}^t \frac{(a_k +j+r-1-i)!}{(j+r-i-1)!\, a_k!} \label{eq:rider} \\
 &=\sum_{i=0}^{j}(-1)^i {j \choose i}\prod_{k=1}^{t} \frac{ {j+r-1 \choose i}  {a_k+j +r -1 \choose j+r-1}  } {{a_k +j +r -1 \choose i }} \nonumber \\
 &=\left(\prod_{k=1}^{t} {a_k+j+r-1  \choose j+r-1}\right)\sum_{i=0}^{j}(-1)^i {j \choose i} \frac{{j+r-1 \choose i }^t}{\prod_{k=1}^t {a_k+j+r-1 \choose i}} \nonumber \\
 &=d_{j+r}(n) \sum_{i=0}^j (-1)^i {j \choose i} \frac{{ j+r-1  \choose i}^t}{\prod _{k=1}^t {a_k +j +r -1 \choose i}}. \nonumber
\end{align}
To obtain the hypergeometric form, we note that
%
%
%
%
%
%
\begin{align*}
\frac{c_j^{(r)}(n)}{d_{j+r}(n)} &= \sum_{i=0}^j \frac{(-j)^{\overline i}}{i!}\,\frac{\left(\frac{(j+r-1)!}{(j+r-1-i)!}\right)^t}{\prod_{k=1}^t \frac{(j+r+a_k-1)!}{(j+r+a_k-1-i)!}} \\
&= \sum_{i=0}^\infty \frac{(-j)^{\overline i}}{i!}\,\frac{\left((-1)^i (1-j-r)^{\overline i}\right)^t}{\prod_{k=1}^t (-1)^i (1-j-r-a_k)^{\overline i}} \\
&= {}_{t+1}F_t(\{1-j-r\}_{k=1}^t, -j; \{1-a_k-j-r\}_{k=1}^t; 1),
\end{align*}
establishing (\ref{eq:thm12}).
For the proof of (\ref{eq:thm13}), we rewrite (\ref{eq:rider}) in the form
\begin{align*}
 c_j^{(r)}(n) &= \sum_{i=0}^j (-1)^{j-i} {j \choose i} \prod_{k=1}^t \frac{(a_k+i+r-1)!}{(i+r-1)!\,a_k!} \\
 &= \left(\prod_{k=1}^t \frac{(a_k+r-1)!}{a_k!\,(r-1)!}\right)\sum_{i=0}^j (-1)^{j-i} {j \choose i} \prod_{k=1}^t \left(\frac{(a_k+r-1+i)!}{(a_k+r-1)!\,i!}\,\frac{i!\,(r-1)!}{(i+r-1)!}\right)
\end{align*}
and apply Lemma \ref{bcrt}.
For the hypergeometric form, we rewrite the last expression as
\begin{align*}
 \frac{c_j^{(r)}(n)}{d_r(n)} &= (-1)^j \sum_{i=0}^j \frac{(-j)^{\overline i}}{i!}\,\prod_{k=1}^t \frac{(a_k +r)^{\overline i}}{r^{\overline i}}
\end{align*}
and note as above that the sum can be extended to an infinite series since
$(-j)^{\overline i} = 0$ if $i > j$.
\end{proof}

\noindent
Specifically for $r=0$, the formula (\ref{eq:thm12}) of Theorem \ref{cjr2} gives
\begin{align*}
\frac{c_j(n)}{d_{j}(n)}&=  \sum_{i=0}^j (-1)^i {j \choose i} \frac{{ j-1  \choose i}^t}{\prod _{k=1}^t {a_k +j -1 \choose i}}
\\
 &=   {}_{t+1}F_t(\{1-j\}_{i=1}^t,-j;\{1-a_i-j \}^{t}_{i=1};1)
\end{align*}
if $n$ has prime factorisation $n = p_1^{a_1}\cdots p_t^{a_t}$.

Clearly these formulae simplify when $n$ is a prime power.
We note that in this case, Lemmata \ref{bcrt} and \ref{binomcd} give
\[
\frac{c_j(p^a)}{d_j(p^a)} = \frac{\binom{a-1}{a-j}}
{\binom{a+j-1}{a}}.
\]
	
\noindent
In the following, we give Dirichlet series for the ratio of divisor functions
$c_j/d_j$
for $j\in\{1, 2, 3\}$,
as well as corresponding Euler products. Note that the term $n=1$ can be
omitted from the Dirichlet series, since $c_j(1) = 0$ for all $j\in\mathbb{N}$.
		 For $j=1$,
		\[ \sum_{n=2}^{\infty} \frac{c_1(n)}{d_1(n)}\frac{1}{n^s} = \sum_{n=2}^{\infty} \frac{1}{n^s} = \zeta(s) -1 .\]
		 For $j=2$, we have $c_2(n) = d_2(n) -2  $ for $n \geq 2 $, so the Dirichlet series for the ratio of divisor functions can be written as
		\[\sum_{n=2}^{\infty} \frac{c_2(n)}{d_2(n)}\,\frac{1}{n^s} = \sum_{n=2}\frac{1}{n^s} - 2 \sum_{n=2}^{\infty} \frac{1}{d_2(n) n^s} = 1 + \zeta(s) - 2 \sum_{n=1}^{\infty} \frac{1}{d_2(n)n^s};  \]
we note the Euler product for the Dirichlet series in the last term,
\begin{align*}
\sum_{n=1}^{\infty} \frac 1{d_2(n)\,n^s}&= \prod_{p \ {\rm prime}} \sum_{j=0}^\infty \frac 1{d_2(p^j)\,p^{sj}}
 = \prod_{p \ {\rm prime}} \sum_{j=0}^\infty \frac 1{(j+1)\,p^{sj}} \\
 &= \prod_{p \ {\rm prime}} \left( p^s \log\left(\frac{1}{1-\frac{1}{p^s}} \right)  \right).
\end{align*}
		 For $j=3 $, we have $c_3(n)= d_3(n) - 3d_2(n) + 3$ for $n \geq 2$, which gives the Dirichlet series for the ratio of divisor functions
		\[ \sum_{n=2}^{\infty} \frac{c_3(n)}{d_3(n)}\,\frac{1}{n^s} = \zeta(s) - 1 - 3 \sum_{n=1}^{\infty} \frac{d_2(n)}{d_3(n)}\frac{1}{n^s} + 3 \sum_{n=1}^{\infty}\frac{1}{d_3(n)}\frac{1}{n^s}; \]
noting
		\[\frac{d_2(p^j)}{d_3(p^j)} = \frac{2}{j+2}, \ \frac{1}{d_3(p^j)} = \frac{2}{(j+1)(j+2)} = \frac{2}{j+1} - \frac{2}{j+2}, \]
we find the Euler products
\begin{align*}
\sum_{n=1}^{\infty} \frac{d_2(n)}{d_3(n)}\frac{1}{n^s} &= \prod_{p \ {\rm prime}} \sum_{j=0}^\infty \frac 2{(j+2)\,p^{sj}}
 = \prod_{p \ {\rm prime}} \sum_{j=2}^\infty \frac{2 p^{2s}}{j\,p^{sj}}
\\\\
 &= \prod_{p \ {\rm prime}} 2p^{2s}\left(   \log\left(\frac{1}{1- \frac{1}{p^s}}\right) - \frac{1}{p^s} \right)
\end{align*}
		and
\begin{align*}
\sum_{n=1}^{ \infty}  \frac{1}{d_3(n)}\frac{1}{n^s} &= \prod_{p \ {\rm prime}} \sum_{j=0}^\infty \left(\frac 2{j+1} - \frac 2{j+2}\right)\frac 1{p^{sj}}
 = \prod_{p \ {\rm prime}} \left(\sum_{j=1}^\infty \frac{2 p^s}{j p^{sj}} - \sum_{j=2}^\infty \frac{2 p^{2s}}{j p^{sj}}\right)
\\
 &= \prod_{p \ {\rm prime}} \left(2 p^s \log \frac 1{1 - \frac 1{p^s}} - 2 p^{2s}\left(\log \frac 1{1-\frac 1{p^s}} - \frac 1{p^s}\right)\right)
\\
 &= \prod_{p \ {\rm prime}} 2 p^{2s} \left(\frac 1{p^s} - \left(1 - \frac 1{p^s}\right)\log\frac 1{1 - \frac 1{p^s}}\right).
\end{align*}

\section{Counting Principal Reversible Squares}
As an illustration for the use of the non-trivial and associated divisor functions,
we show how they can be used to count the different principal reversible
squares of a given size.

A {\it reversible square matrix\/} $M=\left ( M_{i,j}\right )_{i,j \in \mathbb{Z}_n} \in \mathbb{R}^{n\times n}$ is an $n \times n$
matrix with the following symmetry properties (cf. \cite{rOB}, \cite{rSupAlg}),
\begin{description}
\item{(R)} the row and column reversal symmetry
\begin{align*}
 M_{i,j} +M_{i,n+1-j}&=M_{i,k} +M_{i,n+1-k}, \\
 M_{i,j}+M_{n+1-i,j}&=M_{k,j}+M_{n+1-k,j}
 \qquad (i,j,k \in \mathbb{Z}_n),
\end{align*}
\item{(V)} the vertex cross sum property $M_{i,j}+M_{k,l}=M_{i,l}+M_{k,j}$
$(i,j,k,l \in \mathbb{Z}_n)$.
\end{description}
Note that the index calculations are performed in the cyclic ring
$\mathbb{Z}_n := \mathbb{Z}/{n \mathbb{Z}}$, and the top left corner of the matrix has indices
$(1,1) \in \mathbb{Z}_n^2$.

An $n\times n$ {\it principal reversible square\/} is a reversible square matrix $M$ such
that $\{M_{i,j}\mid i,j\in\mathbb{Z}_n\} = \{1,2, \dots, n^2\}$, the entries
in each row and each column appear in increasing order, and
$M_{1,j} = j$ ($j\in\{1,2\}$).

\begin{definition}
Let $n, \alpha\in\mathbb{N}$. The pair of tuples
		\[((i_1, i_2, \dots, i_{\alpha-1}, i_\alpha), \ (j_1, j_2, \ldots, j_{\alpha-1}, j_{\alpha})) \in (\mathbb{N}^\alpha)^2 \]
is called a {\it divisor path set\/} for $n$ (of length $\alpha$) if
		\[i_1 | i_2 | \ldots | i_{\alpha-1}| i_{\alpha}, \qquad 1< i_1<i_2 < \ldots < i_{\alpha-1}<i_{\alpha}= n, \]and
		\[j_1 | j_2 | \ldots | j_{\alpha-1}| j_{\alpha}  | n, \qquad1< j_1 <  j_2 < \ldots <j_{\alpha-1} < j_{\alpha} \leq n. \]
	\end{definition}
	
\begin{theorem}
\label{tdipase}
Let
$n \in \mathbb{N}$. Then from any divisor path set for $n$, a unique $n\times n$ principal
reversible square can be constructed. Conversely, every $n\times n$ principal
reversible square arises from a unique divisor path set.
\end{theorem}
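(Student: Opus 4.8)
The plan is to strip a principal reversible square down to a pair of one‑dimensional sequences, recast the defining conditions as a tiling (direct‑sum) statement about the integer interval $\{0,1,\dots,n^2-1\}$, and then match such tilings bijectively with divisor path sets. First I would exploit the vertex cross sum property (V): putting $k=l=1$ gives $M_{i,j}=M_{i,1}+M_{1,j}-M_{1,1}$, so $M_{i,j}=a_i+b_j$ with $a_i:=M_{i,1}-M_{1,1}$ and $b_j:=M_{1,j}$. The normalisation $M_{1,1}=1$, $M_{1,2}=2$ gives $a_1=0$, $b_1=1$, $b_2=2$, and increase along rows and columns forces $0=a_1<a_2<\dots<a_n$ and $1=b_1<b_2<\dots<b_n$. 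Writing $b_j'=b_j-1$, the requirement $\{M_{i,j}\}=\{1,\dots,n^2\}$ becomes the statement that $A\oplus B'$, with $A=\{a_i\}$ and $B'=\{b_j'\}$, is a direct‑sum tiling of $\{0,1,\dots,n^2-1\}$, equivalently that the generating polynomials satisfy $\bigl(\sum_{a\in A}x^{a}\bigr)\bigl(\sum_{b\in B'}x^{b}\bigr)=1+x+\dots+x^{n^2-1}$. I would note here that property (R) is not an independent constraint: it says exactly that $a_i+a_{n+1-i}$ and $b_j+b_{n+1-j}$ are constant, which will follow automatically from the digit‑complement symmetry of the tiling, so the genuine content is (V) together with the tiling condition.

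The heart of the argument is the classification of these tilings. I would use the classical description of factorisations of an integer interval by mixed‑radix digit systems: any $A\oplus B'=\{0,\dots,n^2-1\}$ arises from a divisor chain $1=c_0\mid c_1\mid\dots\mid c_r=n^2$ together with a two‑colouring of the radices $q_k=c_k/c_{k-1}$ into a \emph{row} class and a \emph{column} class, where $A$ (resp. $B'$) consists of those integers whose mixed‑radix digits vanish outside the row (resp. column) positions. The size constraints $|A|=|B'|=n$ force the product of the row radices and the product of the column radices each to equal $n$, and $b_2'=1$ forces the lowest place value $c_0=1$ to be a column position, so the colour word $w\in\{C,R\}^{r}$ begins with $C$. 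This colouring is a binary word, and distinct words manifestly give distinct sequences $A,B'$ and hence distinct squares.

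Finally I would set up the invertible dictionary between colour words and divisor path sets. Reading $w$ from finest to coarsest place value, its maximal monochromatic runs alternate $C,R,C,R,\dots$ starting with $C$. I take $\alpha$ to be the number of $R$‑runs and define, for $k=1,\dots,\alpha$, the ratio $i_k/i_{k-1}$ to be the product of the radices in the $k$th $R$‑run (so $i_0=1$ and $i_\alpha$ is the product of all row radices, namely $n$) and $j_k/j_{k-1}$ the product of the radices in the $k$th $C$‑run; a trailing $C$‑run, present exactly when $w$ ends in $C$, supplies the leftover factor $n/j_\alpha$. This is precisely why $i_\alpha=n$ always while $j_\alpha\mid n$ with possibly $j_\alpha<n$. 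Non‑emptiness of each run matches strict monotonicity, and $c_k$‑divisibility matches $i_{k-1}\mid i_k$, $j_{k-1}\mid j_k$, so the data is exactly a divisor path set. Conversely a divisor path set rebuilds $w$ by the canonical interleaving ``column‑run then row‑run at each level, leftover column factor placed coarsest,'' hence rebuilds $A,B'$ and the unique square. That these passages are mutually inverse, and that the square built from a path set satisfies (R), (V), the range condition, monotonicity and $M_{1,j}=j$ for $j\in\{1,2\}$, is then routine bookkeeping.

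The step I expect to be the main obstacle is the tiling classification: proving that \emph{every} factorisation of $\{0,\dots,n^2-1\}$ into two $n$‑element sets is of the mixed‑radix form above. The natural self‑contained route is an induction on $\Omega(n^2)$, peeling off the smallest place value at which $A$ or $B'$ first becomes non‑trivial and using the uniqueness of representations together with the strict ordering to identify the next radix; this is where the delicate arguments lie. Everything downstream — the alternating run‑length translation and the verification of the axioms — is mechanical once the classification is in hand.
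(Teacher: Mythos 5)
The paper itself contains no internal proof of Theorem \ref{tdipase}: it defers the construction and proof to Chapter 3 of Ollerenshaw and Br\'ee \cite{rOB}, and then remarks that the theorem can alternatively be obtained as a special case of Theorem 9 of \cite{rHLS}, with divisor path sets corresponding to joint ordered factorisations of $(n,n)$. Your proposal reconstructs precisely this second route. The reduction via (V) to $M_{i,j}=a_i+b_j$, the translation of the range condition into a direct-sum decomposition $A\oplus B'=\{0,1,\dots,n^2-1\}$, and your dictionary between the alternating monochromatic runs of a two-coloured divisor chain and the tuples $((i_k),(j_k))$ --- including the trailing column factor $n/j_\alpha$ present exactly when $j_\alpha<n$ --- is exactly the correspondence the paper states immediately after the theorem. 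The crux you isolate, that \emph{every} two-set direct-sum decomposition of an integer interval is of mixed-radix form, is precisely the content of the cited Theorem 9 of \cite{rHLS} (a de Bruijn-type classification of additive systems), so your argument, like the paper's, is not self-contained at that point; but you have identified the correct key lemma, it is true, and the induction you sketch (peel off the smallest place value, use uniqueness of representation to identify the next radix) is how it is proved. Two further merits: your observation that (R) is automatic from (V), monotonicity and the range condition (via digit complementation $a\mapsto\max A-a$, which preserves any mixed-radix summand) is a genuine sharpening the paper does not make; and this structural route, unlike the constructive block-building of \cite{rOB}, generalises to any number of component sets, which is exactly why \cite{rHLS} covers higher-dimensional arrays.

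One slip to repair: the claim that ``distinct words manifestly give distinct sequences $A,B'$'' is false as stated, because refining a radix into two adjacent radices of the same colour does not change the tiling --- for instance the chain $1\mid 2\mid 4\mid 16$ coloured $C,C,R$ and the chain $1\mid 4\mid 16$ coloured $C,R$ both give $B'=\{0,1,2,3\}$ and $A=\{0,4,8,12\}$. Uniqueness, which is what the ``unique divisor path set'' half of the theorem requires, holds only at the level of maximal monochromatic run products --- exactly the data the divisor path set records --- and it needs a short argument that these products are recoverable from the tiling itself: $j_1$ is the smallest positive element of $A$, the elements of $B'$ below $i_1 j_1$ are $0,1,\dots,j_1-1$, and so on inductively. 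With that adjustment, your outline is complete modulo the classification lemma, which you are entitled to quote just as the paper quotes its references.
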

\noindent
For the details of the construction and proof of Theorem \ref{tdipase}, we
refer the reader to Chapter 3 of \cite{rOB}.
In that book, a principal reversible square constructed from a divisor path
set of length $\alpha$ is said to have $\alpha-1$ {\it progressive factors\/}.

Alternatively, Theorem \ref{tdipase} and the
construction can be obtained as a special case, for a two-dimensional square
array, of Theorem 9 of \cite{rHLS}; note that the ratios of consecutive
divisors in the divisor path set correspond to the factors appearing in the
joint ordered factorisation defined in \cite{rHLS}. Specifically, the
above divisor path set corresponds to the joint ordered factorisation
\[
 ((2,j_1), (1,i_1), (2,j_2/j_1), (1,i_2/i_1), (2,j_3/j_2), \dots,
 (2,j_\alpha/j_{\alpha-1}), (1,i_\alpha/i_{\alpha-1}), (2, n/j_\alpha))
\]
of $(n, n)$,
with the last entry omitted if
$j_\alpha = n.$

Using the bijection between divisor path sets and principal reversible squares
given by Theorem \ref{tdipase}, we can count the number of different principal
reversible squares of size $n\times n$ in terms of the non-trivial and associated
divisor functions of $n$ as follows.

\begin{theorem}\label{mcl6}
Let
$n\in\mathbb{N}$.
The number of different $n\times n$ principal reversible squares
is given by
			\[N_n=\sum_{j=1}^{\Omega(n)}c_j(n)\left (c_j(n)+c_{j+1}(n)\right )=\sum_{j=1}^{\Omega(n)}c_j^{(0)}(n)c_j^{(1)}(n).  \]
\end{theorem}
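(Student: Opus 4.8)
The plan is to combine the bijection of Theorem~\ref{tdipase} with a direct enumeration of divisor path sets, organised according to their common length $\alpha$. Since Theorem~\ref{tdipase} gives a bijection between $n\times n$ principal reversible squares and divisor path sets for $n$, it suffices to count the latter. A divisor path set is an ordered pair consisting of an $i$-chain $1<i_1<\dots<i_\alpha=n$ and a $j$-chain $1<j_1<\dots<j_\alpha\le n$ of the \emph{same} length $\alpha$, subject to the stated divisibility relations. Once $\alpha$ is fixed the two chains are chosen independently, so I would first count each type of chain of length $\alpha$ separately, then take the product, and finally sum over $\alpha$.

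For the $i$-chains I would exhibit a bijection with the ordered non-trivial factorisations of $n$ into $\alpha$ factors. Given $1<i_1<\dots<i_\alpha=n$ with $i_1\mid i_2\mid\dots\mid i_\alpha$, set $i_0=1$ and take the successive ratios $i_k/i_{k-1}$; the strict inequalities force each ratio to be at least $2$, and their product is $i_\alpha=n$, so this is an ordered factorisation of $n$ into $\alpha$ factors each $\ge2$. Conversely each such factorisation reconstructs a unique chain. Hence the number of $i$-chains of length $\alpha$ equals $c_\alpha(n)=c_\alpha^{(0)}(n)$. This is the same reasoning already used in the proof of Lemma~\ref{cjrec}.

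For the $j$-chains the top element $j_\alpha$ is only required to divide $n$, so I would stratify by the value $d=j_\alpha\mid n$. For each fixed divisor $d$, the chains $1<j_1<\dots<j_\alpha=d$ are counted exactly as above by $c_\alpha(d)$, so the total number of $j$-chains of length $\alpha$ is $\sum_{d\mid n}c_\alpha(d)$, which is precisely $c_\alpha^{(1)}(n)$ by the defining recurrence of the associated divisor function. Multiplying the two counts gives $c_\alpha^{(0)}(n)\,c_\alpha^{(1)}(n)$ divisor path sets of length $\alpha$; summing over $\alpha$ and noting that $c_\alpha(n)=0$ once $\alpha>\Omega(n)$ both bounds the range of summation and yields the second displayed expression for $N_n$. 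The first expression then follows immediately from Lemma~\ref{mixcjr} with $r=1$, which gives $c_\alpha^{(1)}=c_\alpha+c_{\alpha+1}$.

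The main difficulty here is combinatorial bookkeeping rather than analysis. One must ensure that the two chains of a divisor path set genuinely share the common length $\alpha$, so that the product structure is valid, and that the asymmetry of the two endpoint conditions --- $i_\alpha=n$ exactly versus $j_\alpha\le n$ with $j_\alpha\mid n$ --- is faithfully reflected in the distinction between $c_\alpha^{(0)}$ and $c_\alpha^{(1)}$. Verifying the chain-to-factorisation correspondence rigorously (injectivity, surjectivity, and the role of the strict inequalities in forcing each factor to be at least $2$) is the step demanding the most care, and it is the point where an off-by-one error in the length or in the range of $\alpha$ would most easily creep in.
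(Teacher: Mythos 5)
Your proposal is correct and follows essentially the same route as the paper: the bijection of Theorem~\ref{tdipase}, stratification of divisor path sets by their common length $\alpha$, the count $c_\alpha(n)$ for the $i$-chains via successive ratios, and the identification of the $j$-chain count with $c_\alpha^{(1)}(n)$. The one (minor) difference lies in that last step: you stratify the $j$-chains by their top element $d=j_\alpha$ and invoke the defining recurrence $c_\alpha^{(1)}(n)=\sum_{d\mid n}c_\alpha(d)$ directly, recovering the expression $c_\alpha(n)+c_{\alpha+1}(n)$ afterwards from Lemma~\ref{mixcjr}; the paper instead appends the extra factor $n/j_\alpha$ to each $j$-chain, obtaining $c_\alpha(n)+c_{\alpha+1}(n)$ by the case analysis $j_\alpha=n$ versus $j_\alpha<n$, and then applies Lemma~\ref{mixcjr} in the opposite direction to identify this sum with $c_\alpha^{(1)}(n)$. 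Both derivations are valid and of comparable length; yours is arguably a touch cleaner in that it leans on the definition of $c_\alpha^{(1)}$ rather than a two-case argument, while the paper's version makes the formula $c_\alpha+c_{\alpha+1}$ combinatorially transparent.
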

		\begin{proof}
By Theorem \ref{tdipase}, it is sufficient to count the number of different
divisor path sets for $n$.

Suppose
$((i_1, \dots, i_\alpha), (j_1, \dots, j_\alpha))$
is a divisor path set for $n$ of length $\alpha$. Then the left-hand tuple gives
an ordered factorisation of $n$ into $\alpha$ factors,
\[
 i_1 \ \frac{i_2}{i_1} \ \frac{i_3}{i_2} \ \cdots \ \frac{i_\alpha}{i_{\alpha-1}} = n
\]
with all factors $> 1$; there are
$c_\alpha(n)$
such non-trivial factorisations.

The right-hand tuple gives an ordered factorisation of $n$ into $\alpha+1$
factors,
\[
 j_1 \ \frac{j_2}{j_1} \ \frac{j_3}{j_2} \ \cdots \ \frac{j_\alpha}{j_{\alpha-1}} \ \frac{n}{j_\alpha} = n,
\]
where the last factor is $> 1$ if and only if $j_\alpha < n$, and all other
factors are $> 1$.
%
As the two cases are
mutually exclusive, the total number of different factorisations is then given by
$c_{\alpha}(n) + c_{\alpha+1}(n) = c_{\alpha}^{(1)}(n)$, by Lemma \ref{mixcjr}.
The statement of the theorem follows by summing over $\alpha\in\mathbb{N}$,
noting that $c_\alpha(n) = 0$ if $\alpha > \Omega(n)$.
\end{proof}

\begin{remark}
Combining the formula of Theorem \ref{mcl6} with Lemma \ref{mixcjr}, the count
$N_n$ can be expressed in terms of the (multiplicative) divisor functions,
\begin{equation}
\label{sumdiprob}
N_n=\sum_{j=1}^{\Omega (n)} \sum_{l=1}^j \sum_{m=0}^{j} (-1)^{l+m} \binom{j}{l}
   \binom{j}{m} d_l(n) d_{m+1}(n).
\end{equation}
Using the prime factorisation
$n = \prod\limits_{k=1}^t p_k^{a_k}$, this takes the form
\[
N_n=\sum_{j=1}^{\Omega (n)} \sum_{l=1}^j \sum_{m=0}^{j} (-1)^{l+m} \binom{j}{l}   \binom{j}{m} \prod_{k=1}^t {a_k +l -1 \choose l-1} {a_k +m \choose m}.
\]
We note that the terms of the sum in (\ref{sumdiprob}) bear some similarity
to the sum underlying the additive divisor problem (\ref{adp}).
\end{remark}

\begin{corollary}
\label{tc1}
Let $n\in\mathbb{N}$. Then $N_n = 1$ if and only if $n$ is prime.
\end{corollary}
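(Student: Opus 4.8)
The plan is to work directly from the identity
$N_n=\sum_{j=1}^{\Omega(n)}c_j(n)\bigl(c_j(n)+c_{j+1}(n)\bigr)$
established in Theorem \ref{mcl6}, isolating the $j=1$ term and bounding everything else below by zero. The three facts I would lean on are: $c_1(n)=1$ for every $n\ge 2$ (the only non-trivial factorisation into a single factor is $n$ itself, whereas $c_1(1)=0$ since $1<2^1$); every summand $c_j(n)\bigl(c_j(n)+c_{j+1}(n)\bigr)$ is non-negative because the divisor functions take non-negative values; and $c_2(n)=d_2(n)-2$ for $n\ge 2$, as noted before Lemma \ref{dscj}.

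First I would dispatch the forward implication. If $n$ is prime then $\Omega(n)=1$, so the sum reduces to its single $j=1$ term $c_1(n)\bigl(c_1(n)+c_2(n)\bigr)$. Here $c_1(n)=1$ and $c_2(n)=d_2(n)-2=2-2=0$, so this term equals $1$ and hence $N_n=1$.

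For the converse I would argue by contraposition, showing that $n$ not prime forces $N_n\neq 1$. The degenerate case $n=1$ is immediate: then $\Omega(1)=0$, the sum is empty, and $N_1=0$. If instead $n$ is composite, it has a divisor strictly between $1$ and $n$, whence $c_2(n)=d_2(n)-2\ge 1$. Retaining the $j=1$ term and discarding the remaining non-negative summands gives
\[
 N_n \ge c_1(n)\bigl(c_1(n)+c_2(n)\bigr) = 1+c_2(n)\ge 2,
\]
so $N_n\neq 1$. Combining the two implications yields the stated equivalence.

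There is no substantial obstacle: the whole argument is a lower bound in which the $j=1$ contribution alone already exceeds $1$ as soon as $n$ admits a proper non-trivial divisor. The only point needing care is the bookkeeping at the boundary, namely confirming that $c_1(n)=1$ precisely for $n\ge 2$ and treating $n=1$ separately through the empty sum, since these are exactly the places where the clean ``extract the first term'' reasoning would otherwise require qualification.
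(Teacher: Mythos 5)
Your proposal is correct and is essentially the paper's own argument: both directions rest on the Theorem~\ref{mcl6} formula, the non-negativity of all summands, $c_1(n)=1$ for $n\ge 2$, and the fact that $c_2(n)=0$ exactly characterises primality among $n\ge 2$. Your converse is merely the contrapositive phrasing of the paper's (the paper assumes $N_n=1$ and forces $c_j(n)=0$ for $j\ge 2$; you assume compositeness and force $N_n\ge 2$), with the added bonus of treating $n=1$ explicitly, which the paper passes over silently.
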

\begin{proof}
If $n$ is prime, then $c_1(n) = 1$ and $c_j(n) = 0$ for all $j\ge 2$, and it follows that
$N_{n}=c_1(n)\left(c_1(n)+c_{2}(n)\right )=1$.
Conversely, suppose
 $n \ge 2$ is an integer such that
$N_n = 1$. By Theorem \ref{mcl6},
\[
 N_n = \sum_{j=1}^{\Omega(n)} c_j(n)^2 + \sum_{j=1}^{\Omega(n)} c_j(n) c_{j+1}(n).
\]
As all terms of these sums are non-negative and $c_1(n) = 1$, the total can
equal 1 only if $c_j(n) = 0$ $(j \ge 2)$, which implies that
$n$ is prime.
\end{proof}

\begin{corollary}\label{tc2}
Let $n = p^a$ with $a \in\mathbb{N}$ and prime $p$.
Then
\[
N_n=\binom{2a-1}{a}.
\]
\end{corollary}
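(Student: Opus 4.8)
The plan is to specialise the formula of Theorem \ref{mcl6} to the prime power $n = p^a$ and then evaluate the resulting sum of products of binomial coefficients in closed form. The starting point is the identity
\[
 N_n = \sum_{j=1}^{\Omega(n)} c_j^{(0)}(n)\,c_j^{(1)}(n),
\]
together with the observation that $\Omega(p^a) = a$, so the sum runs over $j$ from $1$ to $a$.

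Next I would use Lemma \ref{binomcd}, which gives the value of the associated divisor function at a prime power as $c_j^{(r)}(p^a) = \binom{a+r-1}{j+r-1}$. Taking $r=0$ and $r=1$ yields $c_j^{(0)}(p^a) = \binom{a-1}{j-1}$ and $c_j^{(1)}(p^a) = \binom{a}{j}$, respectively. Substituting these into the formula above reduces the claim to the purely combinatorial evaluation
\[
 N_{p^a} = \sum_{j=1}^{a} \binom{a-1}{j-1}\binom{a}{j}.
\]

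The crux of the argument is to recognise this sum as a Vandermonde (Chu--Vandermonde) convolution. Using the symmetry $\binom{a}{j} = \binom{a}{a-j}$ and shifting the summation index by setting $i = j-1$, the sum becomes $\sum_{i=0}^{a-1}\binom{a-1}{i}\binom{a}{a-1-i}$, which by the Vandermonde identity equals $\binom{(a-1)+a}{a-1} = \binom{2a-1}{a-1} = \binom{2a-1}{a}$, establishing the stated formula. I expect no genuine obstacle here; the only step requiring a moment of thought is the index manipulation and symmetry flip needed to bring the sum into the standard Vandermonde shape, after which the evaluation is immediate.
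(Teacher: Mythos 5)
Your proposal is correct and follows essentially the same route as the paper: specialise Theorem \ref{mcl6} to $n=p^a$, evaluate $c_j^{(0)}(p^a)=\binom{a-1}{j-1}$ and $c_j^{(1)}(p^a)=\binom{a}{j}$ via Lemma \ref{binomcd}, and sum $\sum_{j=1}^a\binom{a-1}{j-1}\binom{a}{j}=\binom{2a-1}{a}$. The only difference is cosmetic: the paper cites Gould's identity (3.20) for the final evaluation, whereas you carry out the Chu--Vandermonde convolution explicitly (with the symmetry flip and index shift), which is the same identity spelled out.
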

\begin{proof}
Theorem \ref{mcl6} and Lemma \ref{binomcd} give
\[
N_{p^a}=\sum_{j=1}^a \binom{a-1}{j-1}\binom{a}{j}=\sum_{j=0}^{a-1} \binom{a-1}{j}\binom{a}{j+1} = \binom{2a-1}a
\]
by combinatorial identity (3.20) of \cite{rHWG}.
\end{proof}

Counting principal reversible squares is of interest not only in view of their
bijection to most perfect squares \cite{rOB}, but also because of their
relationship with sum-and-distance systems. In the present context, these are
composed of two finite component sets, of equal cardinality, of natural
numbers, such that the numbers formed by considering all sums and all
absolute differences of all pairs of numbers, each taken from one of the
component sets, with or without inclusion of the component sets themselves, combine to an arithmetic progression without repetitions.
Such systems arise naturally from the question of constructing a certain type
of rank 2 traditional magic squares using the formulae given in \cite{rSupAlg}.
We refer the reader to \cite{rHLS} for further details and for the extension
of the following definitions and of Theorem \ref{sdsisprv} to any finite
number of component sets of arbitrary finite cardinality.
	
\begin{definition}
Let $m\in\mathbb{N}$ and consider positive integers $a_j, b_j \in\mathbb{N}$
$(j\in\{1,\dots,m\})$ such that
\[
a_1 < a_2 < \cdots < a_m, \qquad b_1 < b_2 < \cdots < b_m.
\]
We call $\{\{a_j : j\in\{1,\dots,m\}\}, \{b_j : j\in\{1,\dots,m\}\}\}$
\hfill\break
a) an $m+m$ {\it (non-inclusive) sum-and-distance system\/} if
\[
 \{a_j+b_k, |a_j-b_k| : j,k\in\{1,\dots,m\}\} = \{1, 3, 5, \dots, 4m^2-1\};
\]
b) an $m+m$ {\it inclusive sum-and-distance system\/} if
\[
 \{a_j, b_k, a_j+b_k, |a_j-b_k| : j,k\in\{1,\dots,m\}\} = \{1, 2, 3, \dots, 2m(m+1)\}.
\]
\end{definition}

It is easy to see that the conditions in a) and b) above are equivalent to
\begin{align}
\label{sudi2}
 \{\pm a_j : j\in\{1,\dots, m\}\} &+ \{\pm b_k : k\in\{1,\dots,m\}\}\nonumber
\\
 &= \{-4m^2+1, -4m^2+3, \dots, 4m^2-3, 4m^2-1\}
\end{align}
and
\begin{align}
\label{sudi3}
 \{0, \pm a_j : j\in\{1,\dots,m\}\} &+ \{0, \pm b_k : k\in\{1,\dots,m\}\}\nonumber
\\
 &= \{-2m(m+1), -2m(m+1)+1, \dots, 2m(m+1)\},
\end{align}
respectively,
where we use the usual sum of sets
$A + B = \{x + y : x\in A, y\in B\}$.

	\begin{example}
%
For $m=3$ there are the seven $3+3$ (non-inclusive) sum-and-distance systems
\begin{align*}
&\{\{ 1 , 3 , 5 \},\{ 6 , 18 , 30\}\},\quad\{\{ 1 , 7 , 9\}, \{2 , 22 , 26\}\},\quad\{\{1 , 11 , 13\}, \{14 , 18 , 22\}\},
		\\
&\{\{1 , 23 , 25\}, \{2 , 6 , 10\}\},\quad\{\{3 , 9 , 15\}, \{16 , 18 , 20\}\},\quad\{\{3 , 21 , 27\}, \{4 , 6 , 8\}\},
\\
&\{\{7 , 9 , 11\}, \{12 , 18 , 24\}\},
\end{align*}
		but just the one inclusive $3+3$ sum-and-distance system $\{\{1 , 2 , 3\}, \{7 , 14 , 21\}\}$.
\end{example}

Sum-and-distance systems of the non-inclusive and inclusive variety are
intimately connected with principal reversible squares.
Indeed, let $n\in\mathbb{N}$ and consider a principal reversible square
$(M_{j,k})_{j,k\in\mathbb{Z}_n}$; for all $j\in\{1,\dots,n\}$, define
$\alpha_j = M_{1,j} - 1$ and ${\beta_j = M_{j,1} - 1}$.
From property (V) of the reversible square and the fact that $M_{1,1}=1$, it then follows that
\begin{equation}\label{sudi4}
 M_{j,k} = M_{j,1} + M_{1,k} - 1 = \alpha_j + \beta_k + 1 \qquad (j,k\in\{1,\dots,n\});
\end{equation}
note that $\alpha_1 = \beta_1 = 0$.
By property (R),
\begin{equation}\label{sudi1}
 \alpha_j + \alpha_{n+1-j} = \alpha_n, \quad \beta_j + \beta_{n+1-j} = \beta_n
 \qquad (j\in\{1, \dots, n\}).
\end{equation}
Now suppose $n$ is even, $n = 2m$. Then setting
\[
 a_j = \alpha_{m+j} - \alpha_{m+1-j}, \quad b_j = \beta_{m+j} - \beta_{m+1-j}
 \qquad (j\in\{1, \dots, m\})
\]
defines an $m+m$ non-inclusive sum-and-distance system.
Indeed, it follows from (\ref{sudi1}) that
\[
 a_j = 2 \alpha_{m+j} - \alpha_n, \quad -a_j = 2 \alpha_{m+1-j} - \alpha_n
 \qquad (j\in\{1,\dots,m\})
\]
and correspondingly for $\pm b_k$ $(k\in\{1,\dots,m\})$, so
the equality (\ref{sudi2}) can be verified using the facts that
$\alpha_n + \beta_n = M_{n,n}-1 = n^2-1$ and that
\[
 \{\alpha_j + \beta_k : j,k\in\{1,\dots,n\}\} = \{M_{j,k}-1 : j,k\in\{1,\dots,n\}\} = \{0, 1, \dots, n^2-1\}.
\]
Conversely, given an $m+m$ non-inclusive sum-and-distance system and setting
\begin{align*}
 \alpha_{m+j} = \frac{a_m + a_j}2, &\quad \alpha_{m+1-j} = \frac{a_m - a_j}2 \qquad (j\in\{1,\dots,m\}),
\\
 \beta_{m+k} = \frac{b_m + b_k}2, &\quad \beta_{m+1-k} = \frac{b_m - b_k}2 \qquad (k\in\{1,\dots,m\}),
\end{align*}
equation (\ref{sudi4}) defines a principal reversible square (or its transpose).

Similarly, if $n = 2m+1$ is odd, then an $m+m$ inclusive sum-and-distance
system can be obtained from any $n\times n$ principal reversible square by
setting
\[
 a_j = \frac{\alpha_{m+1+j}-\alpha_{m+1-j}}2, \quad
 b_j = \frac{\beta_{m+1+j}-\beta_{m+1-j}}2 \qquad (j\in\{1,\dots,m\});
\]
now by (\ref{sudi1}),
\[
 a_j = \alpha_{m+1+j} - \frac{\alpha_n}2, \quad
 -a_j = \alpha_{m+1-j} - \frac{\alpha_n}2 \qquad (j\in\{1,\dots,m\}),
\]
and similarly for $b_k$ $(k\in\{1,\dots,m\})$, and hence equality (\ref{sudi3})
follows in analogy to the above.
Conversely, setting
\begin{align*}
 \alpha_{m+1+j} = a_m + a_j, \quad \alpha_{m+1-j} = a_m - a_j \qquad (j\in\{1,\dots, m\}),
\\
 \beta_{m+1+k} = b_m + b_k, \quad \beta_{m+1-k} = b_m - b_k \qquad (k\in\{1,\dots, m\}),
\end{align*}
and
$\alpha_{m+1} = a_m$, $\beta_{m+1} = b_m$, we obtain a principal reversible
square (or its transpose), via (\ref{sudi4}), from an inclusive sum-and-distance
system.

Thus we have proven the following statement.
\begin{theorem}
\label{sdsisprv}
Let $m\in\mathbb{N}$. Then there is a bijection between the $m+m$ non-inclusive sum-and-distance systems
and the $2m \times 2m$ principal reversible squares, and there is a bijection
between the $m+m$ inclusive sum-and-distance systems and the $(2m+1)\times (2m+1)$ principal reversible squares.
\end{theorem}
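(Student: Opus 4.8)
The plan is to prove both bijections by exhibiting, in each parity case, a pair of mutually inverse maps and checking that each is well-defined. The organising observation is that equation (\ref{sudi4}) shows a reversible square with $M_{1,1}=1$ to be completely determined by its two boundary tuples $(\alpha_j)_{j=1}^n$ and $(\beta_k)_{k=1}^n$; so I would recast both statements as facts about these tuples. A principal reversible square corresponds exactly to a pair of strictly increasing integer tuples starting at $0$, satisfying the reflection relation (\ref{sudi1}) and the range condition $\{\alpha_l + \beta_{l'}\} = \{0, 1, \dots, n^2-1\}$, together with the normalisation $M_{1,2}=2$ that fixes the orientation. The whole theorem then becomes a translation between such tuples and the symmetrised component sets of a sum-and-distance system.

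For the even case $n = 2m$, the forward map is the assignment $a_j = \alpha_{m+j} - \alpha_{m+1-j}$, $b_k = \beta_{m+k} - \beta_{m+1-k}$ already introduced before the statement. Two things need checking: that the $a_j$ (and $b_k$) are positive and strictly increasing, which is immediate since $(\alpha_j)$ is strictly increasing, so $\alpha_{m+j}$ rises and $\alpha_{m+1-j}$ falls in $j$; and that (\ref{sudi2}) holds. The engine for the latter is the identity $\{\pm a_j\} = \{2\alpha_l - \alpha_n : l = 1, \dots, n\}$, obtained from (\ref{sudi1}), and its analogue for the $b_k$. Adding these two symmetric sets produces $\{2(\alpha_l + \beta_{l'}) - (\alpha_n + \beta_n)\}$, and the key inputs $\{\alpha_l + \beta_{l'}\} = \{0, 1, \dots, n^2-1\}$ (which is just $\{M_{l,l'}-1\}$) and $\alpha_n + \beta_n = n^2-1$ collapse this to the progression of odd numbers in (\ref{sudi2}).

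For the reverse map I would use $\alpha_{m+j} = (a_m + a_j)/2$, $\alpha_{m+1-j} = (a_m - a_j)/2$ and the corresponding formulas for $\beta$. The checks are: integrality, which holds because the sum-and-distance condition forces all $a_j$ to share one parity and all $b_k$ the opposite parity, so the numerators are even; the relation (\ref{sudi1}) and property (V), both immediate from $M_{j,k} = \alpha_j + \beta_k + 1$; strict monotonicity of the reconstructed $(\alpha_j)$, hence increasing rows and columns; and $\{M_{j,k}\} = \{1, \dots, n^2\}$, which is the computation of the previous paragraph read backwards. Composing the two maps returns the original data by construction. The odd case $n = 2m+1$ with inclusive systems runs identically, except that the central index $m+1$ satisfies $\alpha_{m+1} = \alpha_n/2$ and so contributes the value $0$ to the symmetrised sets; consequently (\ref{sudi3}), which includes $0$ and the component sets, replaces (\ref{sudi2}), and the reconstruction formulas are those displayed in the text (with no halving needed).

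The main obstacle is not the set-theoretic identity, which reduces cleanly to $\{\alpha_l + \beta_{l'}\} = \{0, \dots, n^2-1\}$, but the normalisation that makes the correspondence a genuine bijection rather than two-to-one. Swapping the roles of $(\alpha_j)$ and $(\beta_k)$ transposes the square yet fixes the unordered pair $\{\{a_j\}, \{b_k\}\}$, so I must verify that exactly one orientation satisfies $M_{1,2}=2$. This is where $\alpha_1 = \beta_1 = 0$ and nonnegativity of the entries enter: the value $2$ occurs where $\alpha_j + \beta_k = 1$, which forces either $\alpha_2 = 1$ or $\beta_2 = 1$, and distinctness of the entries rules out both holding simultaneously. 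Pinning down this unique orientation is precisely what justifies the parenthetical ``or its transpose'' in the constructions and secures the claimed bijection.
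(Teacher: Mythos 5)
Your proposal follows essentially the same route as the paper: the paper's proof is precisely the construction laid out in the text before the theorem, using the boundary tuples $\alpha_j = M_{1,j}-1$, $\beta_j = M_{j,1}-1$, the identity $M_{j,k}=\alpha_j+\beta_k+1$, the same forward maps $a_j = \alpha_{m+j}-\alpha_{m+1-j}$ (halved in the odd case), the same reconstruction formulas, and the same key inputs $\{\alpha_l+\beta_{l'}\}=\{0,1,\dots,n^2-1\}$ and $\alpha_n+\beta_n=n^2-1$. Your extra checks --- strict monotonicity of the $a_j$, integrality of the reconstruction via the parity argument, and the $M_{1,2}=2$ normalisation resolving the transpose ambiguity --- are correct and simply make explicit what the paper compresses into its ``(or its transpose)'' remark.
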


In conjunction with Theorem \ref{mcl6}, this gives the following counting of
non-inclusive and inclusive sum-and-distance systems.

\begin{corollary}
Let $m\in\mathbb{N}$. Then there are
\[
 N_{2m} = \sum_{j=1}^{\Omega(2m)} c_j^{(0)}(2m)\,c_j^{(1)}(2m)
\]
different $m+m$ non-inclusive sum-and-distance systems, and
\[
 N_{2m+1} = \sum_{j=1}^{\Omega(2m+1)} c_j^{(0)}(2m+1)\,c_j^{(1)}(2m+1)
\]
different $m+m$ inclusive sum-and-distance systems.
\end{corollary}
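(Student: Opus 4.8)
The plan is to combine the two principal results of this section. Theorem \ref{sdsisprv} establishes that the $m+m$ non-inclusive sum-and-distance systems are in bijection with the $2m\times 2m$ principal reversible squares, and that the $m+m$ inclusive sum-and-distance systems are in bijection with the $(2m+1)\times(2m+1)$ principal reversible squares. Theorem \ref{mcl6} provides the count
\[
N_n = \sum_{j=1}^{\Omega(n)} c_j^{(0)}(n)\,c_j^{(1)}(n)
\]
of $n\times n$ principal reversible squares for any $n\in\mathbb{N}$. Since a bijection preserves cardinality, the number of objects in each sum-and-distance family equals the corresponding count $N_n$, and the corollary will follow by specialising this count to the appropriate parity of $n$.

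First I would treat the non-inclusive case. By the first bijection of Theorem \ref{sdsisprv}, the number of $m+m$ non-inclusive sum-and-distance systems equals the number of $2m\times 2m$ principal reversible squares. Substituting $n = 2m$ into the formula of Theorem \ref{mcl6} then yields the stated expression for $N_{2m}$.

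Next I would treat the inclusive case in exactly the same way. By the second bijection of Theorem \ref{sdsisprv}, the number of $m+m$ inclusive sum-and-distance systems equals the number of $(2m+1)\times(2m+1)$ principal reversible squares, and substituting $n = 2m+1$ into Theorem \ref{mcl6} gives the stated expression for $N_{2m+1}$.

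Since both the bijections and the counting formula have already been established, there is no substantive obstacle here: the corollary is a direct specialisation of Theorem \ref{mcl6} to even and odd arguments, transported across the bijections of Theorem \ref{sdsisprv}. The only point requiring any care is the bookkeeping of which parity of $n$ corresponds to which variety of system, namely $n = 2m$ for the non-inclusive systems and $n = 2m+1$ for the inclusive systems.
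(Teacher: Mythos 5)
Your proposal is correct and matches the paper's own treatment: the corollary is stated there as an immediate consequence of combining the bijections of Theorem \ref{sdsisprv} with the counting formula of Theorem \ref{mcl6}, which is precisely your argument. The parity bookkeeping you note ($n=2m$ non-inclusive, $n=2m+1$ inclusive) is exactly how the paper pairs the two cases.
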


To conclude, we briefly note that $m+m$ sum-and-distance systems of either variety have the general property that the sum of squares of all entries of
their component sets is invariant, determined only by the size $m$.
\begin{theorem}
Let $m\in\mathbb{N}$ and $\{\{a_1,\dots,a_m\},\{b_1,\dots,b_m\}\}$ a
(non-inclusive or inclusive) sum-and-distance system. Then
\[
\sum_{j=1}^m (a_j^2 + b_j^2) = \left\{\begin{matrix}
 {\displaystyle \frac 1{3!}}\,(2m)((2m)^4-1) & \textit{in the non-inclusive case,} \\
 {\displaystyle \frac 1{4!}}\,(2m+1)((2m+1)^4-1) & \textit{in the inclusive case.}
\end{matrix}\right.
\]
\end{theorem}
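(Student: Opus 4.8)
The plan is to exploit the symmetric reformulations (\ref{sudi2}) and (\ref{sudi3}) of the two types of sum-and-distance system, together with a counting-by-cardinality argument and the expansion of $(a+b)^2$. I would treat both cases in parallel, since the structure of the argument is identical.

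First, in the non-inclusive case, set $A = \{\pm a_j : j\in\{1,\dots,m\}\}$ and $B = \{\pm b_k : k\in\{1,\dots,m\}\}$, so that $|A| = |B| = 2m$ (the $a_j$ and $b_k$ being distinct positive integers), and let $T = \{-4m^2+1, \dots, 4m^2-1\}$ be the target set of odd integers on the right-hand side of (\ref{sudi2}). The key structural observation is that the map $A\times B \to T$, $(a,b)\mapsto a+b$, is surjective by (\ref{sudi2}), and since $|A\times B| = (2m)^2 = 4m^2 = |T|$, it is in fact a bijection. Hence I can evaluate $\sum_{t\in T} t^2$ as $\sum_{(a,b)\in A\times B}(a+b)^2$.

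Expanding the square gives
\[ \sum_{t\in T}t^2 = |B|\sum_{a\in A}a^2 + 2\Bigl(\sum_{a\in A}a\Bigr)\Bigl(\sum_{b\in B}b\Bigr) + |A|\sum_{b\in B}b^2. \]
The crucial simplification, and the heart of the argument, is that the cross term vanishes: because $A$ and $B$ are symmetric under negation, $\sum_{a\in A}a = \sum_{b\in B}b = 0$. Using $\sum_{a\in A}a^2 = 2\sum_j a_j^2$ and $\sum_{b\in B}b^2 = 2\sum_k b_k^2$, this collapses to $\sum_{t\in T}t^2 = 4m\sum_{j=1}^m(a_j^2 + b_j^2)$, so that $\sum_{j=1}^m(a_j^2+b_j^2) = \frac{1}{4m}\sum_{t\in T}t^2$. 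It then remains only to evaluate the right-hand side using the elementary identity $\sum_{l=1}^N(2l-1)^2 = N(4N^2-1)/3$ with $N = 2m^2$, which yields $\sum_{t\in T}t^2 = 4m^2(16m^4-1)/3$ and hence the claimed value $\frac{1}{3!}(2m)((2m)^4-1)$.

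The inclusive case runs identically with $A' = \{0\}\cup\{\pm a_j\}$ and $B' = \{0\}\cup\{\pm b_k\}$ of cardinality $2m+1$ each, and the target set $T' = \{-2m(m+1),\dots,2m(m+1)\}$ of $(2m+1)^2$ consecutive integers from (\ref{sudi3}); the adjoined $0$ contributes nothing to the sums of first powers or of squares, and the same bijection-and-cross-term-vanishing argument gives $\sum_{j=1}^m(a_j^2+b_j^2) = \frac{1}{2(2m+1)}\sum_{t\in T'}t^2$. Evaluating $\sum_{t\in T'}t^2 = M(M+1)(2M+1)/3$ with $M = 2m(m+1)$, and noting $2M+1 = (2m+1)^2$ and $M = ((2m+1)^2-1)/2$, produces the claimed $\frac{1}{4!}(2m+1)((2m+1)^4-1)$. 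I expect the only point requiring care to be the justification that the sum map is a genuine bijection, so that every element of the target set is attained with multiplicity exactly one; this follows cleanly from the matching of cardinalities, after which everything else is bookkeeping with standard power-sum formulae.
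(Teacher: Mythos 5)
Your proof is correct and follows essentially the same route as the paper: both evaluate the sum of squares of the full arithmetic progression via the standard power-sum formulae and equate it to an expansion over all pairs in which the cross terms cancel, leaving a fixed multiple of $\sum_{j=1}^m(a_j^2+b_j^2)$. The only cosmetic difference is that you work with the symmetrized sets of (\ref{sudi2}) and (\ref{sudi3}) and make the counting bijection explicit, whereas the paper works directly with the sums $a_j+b_k$ and differences $a_j-b_k$ and applies the identity $(a_j+b_k)^2+(a_j-b_k)^2=2a_j^2+2b_k^2$ pairwise; these are the same computation up to a factor of $2$ coming from the sign symmetry.
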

\begin{proof}
In the non-inclusive case we use the formula
\[
 \sum_{j=1}^n (2j-1)^2 = \frac{n (4n^2-1)}3 \qquad (n\in\mathbb{N})
\]
to find
\begin{align*}
 2m \sum_{j=1}^m (a_j^2 + b_j^2) &= \sum_{j=1}^m \sum_{k=1}^m ((a_j+b_k)^2 + (a_j-b_k)^2)
\\
 &= \sum_{j=1}^{2m^2} (2j-1)^2
 = \frac{1}{6}\, 4m^2(16m^4-1).
\end{align*}
In the inclusive case, we similarly use the formula
\begin{align*}
\sum_{j=1}^n j^2 &= \frac{n(n+1)(2n+1)}6 = \frac{(2n+1-1)(2n+1+1)(2n+1)}{24}
\\
 &= \frac{((2n+1)^2-1)(2n+1)}{24}
 \qquad (n\in\mathbb{N})
\end{align*}
and the identity for $n=2m(m+1)$
\[
 2n+1 = 4m^2+4m+1 = (2m+1)^2
\]
to calculate
\begin{align*}
 (2m+1) \sum_{j=1}^m (a_j^2+b_j^2) &= \sum_{j=1}^m \sum{k=1}^m ((a_j+b_j)^2 + (a_j-b_j)^2) + \sum_{j=1}^m a_j^2 + \sum_{j=1}^m b_j^2
\\
 &=\sum_{j=1}^{2m(m+1)} j^2
 = \frac 1{24}\,((2m+1)^4-1)\,(2m+1)^2.
\end{align*}
\end{proof}




\end{document}